\documentclass{amsart}
\usepackage{verbatim}
\usepackage{amssymb}
\usepackage{amsmath}
\usepackage[usenames]{color}
\usepackage{graphicx}
\usepackage{amscd}
\usepackage[numbers,sort&compress,square]{natbib}

\theoremstyle{plain}
\newtheorem{theorem}{Theorem}

\newtheorem{proposition}[theorem]{Proposition}
\newtheorem{lemma}[theorem]{Lemma}
\newtheorem{corollary}[theorem]{Corollary}

\newtheorem{definition}[theorem]{Definition}

\theoremstyle{definition}

\newtheorem{remark}[theorem]{Remark}

\newtheorem{example}[theorem]{Example}
\numberwithin{equation}{section}
\numberwithin{theorem}{section}
\allowdisplaybreaks

% probability notations

% greek letters

\newcommand{\eps}{\varepsilon}

% spaces

% commands

% miscellaneous

\newcommand{\dd}[0]{\mathrm{d}}
\newcommand{\ud}[0]{\,\mathrm{d}}

\newcommand{\sign}{\text{sign\,}}

\begin{document}

\title[Even Fourier multipliers and martingale transforms]
{Even Fourier multipliers and martingale transforms in infinite dimensions}

\author{Ivan S. Yaroslavtsev}
\address{Delft Institute of Applied Mathematics\\
Delft University of Technology \\ P.O. Box 5031\\ 2600 GA Delft\\The
Netherlands}
\email{I.S.Yaroslavtsev@tudelft.nl}

\begin{abstract}
In this paper we show sharp lower bounds for norms of even homogeneous Fourier multipliers in $\mathcal L(L^p(\mathbb R^d; X))$  for $1<p<\infty$ and for a~UMD Banach space $X$ in terms of the range of the corresponding symbol. For example, if the range contains $a_1,\ldots,a_N \in \mathbb C$, then the norm of the multiplier exceeds $\|a_1R_1^2 + \cdots + a_NR_N^2\|_{\mathcal L(L^p(\mathbb R^N; X))}$, where $R_n$ is the corresponding Riesz transform. We also provide sharp upper bounds of norms of Ba\~{n}uelos-Bogdan type multipliers in terms of the range of the functions involved. The main tools that we exploit are $A$-weak differential subordination of martingales and UMD$_p^A$ constants, which are introduced here.
\end{abstract}

\keywords{Even Fourier multipliers, weak differential 
subordination, UMD Banach spaces, martingale transforms}

\subjclass[2010]{42B15, 60G44 Secondary: 60B11, 60G42}
\maketitle

\section{Introduction}

Martingale transform norms play a significant r\^ole in Fourier multiplier theory e.g.\ to obtain sharp bounds for different kinds of Fourier multipliers. The most natural is the norm of a transform $T_{\{-1,1\}}$, which is defined as follows: for a discrete martingale $f=(f_n)_{n\geq 0}$ we set
\begin{equation}\label{eq:defofT-11}
  \bigl(T_{\{-1,1\}}f\bigr)_n:= f_0 + \sum_{k=1}^{n}(-1)^k (f_k-f_{k-1}).
\end{equation}
Due to the fundamental work \cite{Burk84} of Burkholder, the norm of this operator acting on scalar-valued $L^p$-integrable martingales is known to be equal to $p^*-1$ (where $p^* := \max\{p,\frac{p}{p-1}\}$). For this reason $p^*-1$ appears naturally as an upper bound of the Hilbert transform (see Burkholder \cite{Burk89}), and of Ba\~{n}uelos-Bogdan type multipliers (which are also known as {\it L\'evy multipliers}, see Ba\~{n}uelos and Bogdan \cite{BB}, and Ba\~{n}uelos, Bielaszewski, and Bogdan \cite{BBB}). Therefore it is reasonable to extend such multipliers to the so-called {\it UMD Banach spaces}, which are defined in the following way: $X$ is called a UMD Banach space if for some (equivalently, for all) $1<p<\infty$ the operator $T_{\{-1,1\}}$ defined by \eqref{eq:defofT-11} is bounded as an operator on the space of all $X$-valued $L^p$-martingales (an equivalent classical definition is given in \eqref{eq:defofUMDconstant}, the equivalence can be shown by \cite[Lemma 2.1]{Burk84}); the corresponding norm of $T_{\{-1,1\}}$ is then called the {\it UMD$^{\{-1,1\}}_p$ constant} of $X$ and is denoted by $\beta_{p,X}^{\{-1,1\}}$. 
It turns out that the UMD property is necessary and sufficient for the boundedness of the Hilbert transform on $L^p(\mathbb R; X)$ (see Bourgain \cite{Bour83} and Garling \cite{Gar85}). Moreover, the UMD property is equivalent to the boundedness of all Mihlin multipliers on $L^p(\mathbb R^d; X)$ (see McConnell \cite{McC84} and Bourgain \cite{Bour}), and that of Ba\~{n}uelos-Bogdan multipliers on $L^p(\mathbb R^d; X)$ (see \cite{Y17FourUMD}) for $d\geq 2$; furthermore, only in UMD Banach spaces can
nontrivial even homogeneous Fourier multipliers be bounded (see Geiss, Montgomery-Smith, and 
Saksman \cite{GM-SS}). In the latter two approaches the norm of the martingale transform $T_{\{-1,1\}}$ is of big importance as a~sharp upper (respectively, lower) bound. The main goal of this article is to consider the norm of a more general martingale transform $T_{A}$ depending on a bounded set $A\subset \mathbb C$ (instead of just $\{-1,1\}$), and to extend the assertions from \cite{Y17FourUMD} and \cite{GM-SS} towards assertions depending on $A$. We will show that $T_A$, which is defined analogously to $T_{\{-1,1\}}$ (see Theorem \ref{thm:Atransform}), is bounded if and only if $X$ is a~UMD Banach space, and that the corresponding norm is comparable with the UMD$^{\{-1,1\}}_p$ constant; we call this norm the {\it UMD$_p^A$ constant} of $X$ and denote it by $\beta_{p,X}^A$. The basic properties of UMD$_p^A$ constants are discussed in Proposition \ref{prop:propertiesofUMP_p,X^A}. 
One of these properties is pretty obvious: $\beta_{p, X}^{A_1}\leq \beta_{p, X}^{A_2}$ if $A_1\subset A_2$, and it is connected to the following open problem: whether $\beta_{p,X}^{\{-1,1\}} = \beta_{p, X}^A$ for any symmetric set $A$ of diameter~$2$ (e.g.~for the unit disk). This is known to be true in the scalar-valued and in the Hilbert space-valued setting (see \cite[Corollary 4.5.15]{HNVW1}). As we will see later in Remark~\ref{rem:BAtranssharpbounds}, this problem affects the lower bound of the Beurling-Ahlfors transform in the infinite dimensional case.

\smallskip

Using properties of UMD$_p^A$ constants we extend the estimate from \cite[Proposition 3.4]{GM-SS} and prove that the norm $\|T_m\|_{\mathcal L(L^p(\mathbb R^d;X))}$ of a Fourier multiplier $T_m$ with even homogeneous symbol $m:\mathbb R^d \to \mathbb C$ continuous on $\mathbb R^d\setminus \{0\}$ is bounded from below by $\beta^{\text{Ran}\left(m|_{\mathbb R^d\setminus \{0\}}\right)}_{p,X}$, where $\text{Ran}(m|_{\mathbb R^d\setminus \{0\}})$ is the range of $m$ on $\mathbb R^d\setminus \{0\}$. The sharpness of this estimate follows from Example \ref{ex:a_1a_dalphain02}, where we show in particular that
\begin{equation}\label{eq:examplelinconbsecorderriesztrans}
  \|a_1R_1^2 + \ldots + a_d R_d^2\|_{\mathcal L(L^p(\mathbb R^d;X))} = \beta_{p,X}^{\{a_1,\ldots,a_d\}},
\end{equation}
where $R_j$ is the corresponding Riesz transform.

Another interesting family of multipliers was introduced by Ba\~{n}uelos and Bogdan in \cite{BB}. This family is defined through the symbol in the following way
 \begin{equation*}
  m(\xi) = \frac{\int_{\mathbb R^d} (1-\cos (\xi \cdot z))\phi(z)V(\dd z) + 
\frac 12\int_{S^{d-1}}(\xi \cdot \theta)^{2}\psi(\theta)\mu(\dd\theta)}{\int_{\mathbb 
R^d} (1-\cos (\xi \cdot z))V(\dd z)+ \frac 12\int_{S^{d-1}}(\xi \cdot 
\theta)^{2}\mu(\dd\theta)},\;\; \xi \in \mathbb R^d,
 \end{equation*}
 where $V$ is a L\'evy measure, $\mu$ is a bounded measure on the unit sphere $S^{d-1}$, $\phi \in L^{\infty}(\mathbb R^d)$ and $\psi \in L^{\infty}(S^{d-1})$. In \cite{BB}, and later more generally in \cite{BBB} it was shown that if $\|\phi\|_{\infty}, \|\psi\|_{\infty}\leq 1$, then $\|T_m\|_{\mathcal L(L^p(\mathbb R^d))}\leq p^*-1$. In \cite{Y17FourUMD} this result was generalized to the UMD space case and it was proven that $\|T_m\|_{\mathcal L(L^p(\mathbb R^d;X))}\leq \beta_{p, X}^{\mathbb D}$ (here $\mathbb D$ is the unit disk in $\mathbb C$). In the current article we will show that if both $\phi$ and $\psi$ have values in a bounded set $A\subset \mathbb C$, then
 \begin{equation}\label{eq:introBBT_mupperbound}
  \|T_m\|_{\mathcal L(L^p(\mathbb R^d;X))}\leq \beta_{p, X}^{A},
 \end{equation}
 and the sharpness of these estimates again follows from \eqref{eq:examplelinconbsecorderriesztrans}.
 
 An important tool for proving \eqref{eq:introBBT_mupperbound} is so-called {\it $A$-weak differential subordination} of martingales. This is generalization of the notion of weak differential subordination which was introduced in \cite{Y17FourUMD}, and can be characterized in the case of discrete martingales in the following way: an $X$-valued martingale $(g_n)_{n\geq 0}$ is $A$-weakly differentially subordinated to an $X$-valued martingale $(f_n)_{n\geq 0}$ if there exists an adapted sequence $(a_n)_{n\geq 0}$ with values in $A$ such that $g_0 = a_0 f_0$ and $g_n-g_{n-1} = a_n(f_n-f_{n-1})$ for each $n\geq 1$. If this is the case, then due to Theorem \ref{thm:AWDSofdiscretemartingales} below
 \[
  \mathbb E \|g_n\|^p \leq (\beta_{p, X}^A)^p \mathbb E \|f_n\|^p,\;\;\; n\geq 0,
 \]
where $\beta_{p, X}^A$ is the UMD$_p^A$ constant of $X$. An analogous statement holds for purely discontinuous martingales, see Theorem \ref{thm:bBwdsforpurdisc}.
 
 \smallskip
 
 We wish to give the reader a short historical overview on types of martingale transforms different from \eqref{eq:defofT-11}. The first progress in this direction in the scalar-valued case was done by Choi in \cite{Choi92}, where he in fact worked with the martingale transform $T_{\{0,1\}}$. The norm of such a martingale transform equals $\beta_{p,\mathbb R}^{\{0,1\}}$ (which approximately equals $\frac p2 + \frac 12 \log\bigl(\frac{1+e^{-2}}{2}\bigr)$, see \cite[Theorem 4.3]{Choi92}), and is called the {\it Choi constant}. Further, Ba\~{n}uelos and Os{\c{e}}kowski in \cite{BO12} analyzed the norm of $T_{\{b,B\}}$ in the scalar case for arbitrary $b,B \in \mathbb R$, and gave sharp lower and upper bounds of Fourier multipliers, similar to those presented in Theorem \ref{thm:multlowerbound} and \ref{thm:multupperbound}, in terms of this norm. Finally, in \cite{HNVW1} Hyt\"onen, van Neerven, Veraar, and Weis working with UMD spaces introduced the UMD constant in a different way: they used all the numbers $a\in \mathbb C$ such that $|a|=1$ instead of just $-1$ and $1$. (This definition will be used further in the paper). Even though in the scalar-valued case it does not give a significant advantage (see \cite[Corollary 4.5.15]{HNVW1}), it leads to more general assertions in infinite dimensions, in particular to assertions on transforms of type $T_{\mathbb D}$.
 
 \smallskip
 
 In the end we wish to point the reader's attention to the fact that although this paper provides extensions of well-known results from \cite{BB,BBB,GM-SS,Choi92,BO12}, the direction of  these extensions is new even in the scalar-valued case.

 \smallskip
 
\emph{Acknowledgment} -- The author would like to thank Mark Veraar for posing the question concerning generalization of UMD constants and  for his useful suggestions, in particular for showing Remark \ref{rem:conterexevenhomandunbdd}. The author thanks Tuomas Hyt\"onen for a~short fruitful discussion on the topic of the paper, Wolter Groenevelt for discussing Example~\ref{exFourmultGroenevelt}, and Alex Amenta for his helpful comments.

\section{Preliminaries}

In the sequel we set the scalar field $\mathbb K$ be either $\mathbb R$ or 
$\mathbb C$, unless stated otherwise.

\smallskip

A Banach space $X$ over $\mathbb K$ is called a {\it UMD Banach space} if for some (equivalently, for 
all)
$p \in (1,\infty)$ there exists a constant $\beta>0$ such that
for every $N \geq 1$, every martingale
difference sequence $(d_n)^N_{n=1}$ in $L^p(\Omega; X)$, and every scalar-valued 
sequence
$(\varepsilon_n)^N_{n=1}$ such that $|\varepsilon_n|=1$ for each $n=1,\ldots,N$
we have
\begin{equation}\label{eq:defofUMDconstant}
 \Bigl(\mathbb E \Bigl\| \sum^N_{n=1} \varepsilon_n d_n\Bigr\|^p\Bigr )^{\frac 
1p}
\leq \beta \Bigl(\mathbb E \Bigl \| \sum^N_{n=1}d_n\Bigr\|^p\Bigr )^{\frac 1p}.
\end{equation}
The least admissible constant $\beta$ is denoted by $\beta_{p,X}$ and is called 
the {\it UMD$_p$~constant} or, if the value of $p$ is understood, the {\em UMD constant} of $X$.
It is well-known that UMD spaces obtain a large number 
of useful properties, such as being reflexive. Examples of UMD 
spaces include all finite dimensional spaces and the reflexive range of 
$L^q$-spaces, Besov spaces, Sobolev spaces and Schatten class spaces. Example of 
spaces without the UMD property include all nonreflexive Banach spaces, e.g.\ 
$L^1(0,1)$ or $C([0,1])$. We refer the reader to 
\cite{Burk01,HNVW1,Rubio86,Pis16} for details.

\smallskip

Let $X$ be a Banach space, $\mathbb F$ be a filtration, $(f_n)_{n\geq 0}$ be an $X$-valued local martingale. For 
each $n\geq 1$ we define $df_n := f_n - f_{n-1}$, $df_0 := f_0$. Let $1\leq p<\infty$. An $X$-valued martingale $(f_n)_{n\geq 0}$ is called an {\it $L^p$-integrable martingale} (or just an {\it $L^p$-martingale}) if $f_n \in L^p(\Omega;X)$ for each $n\geq 0$, and there exists a limit $f_{\infty} = \lim_{n\to\infty} f_n$ in $L^p(\Omega; X)$. We will denote the linear space of all $X$-valued $L^p$-integrable $\mathbb F$-martingales by $\mathcal M^{\rm dis}_{p}(X,\mathbb F)$. Notice that $\mathcal M^{\rm dis}_{p}(X,\mathbb F)$ is a Banach space with the norm
\[
 \|(f_n)_{n\geq 0}\|_{\mathcal M^{\rm dis}_{p}(X,\mathbb F)} := \|f_{\infty}\|_{L^p(\Omega;X)} = \lim_{n\to \infty} \|f_{n}\|_{L^p(\Omega;X)}.
\]
For simplicity we will omit $\mathbb F$ and denote $\mathcal M^{\rm dis}_{p}(X,\mathbb F)$ by $\mathcal M^{\rm dis}_{p}(X)$. Details on $\mathcal M^{\rm dis}_{p}(X)$ can be found in \cite[Section 3.1]{HNVW1}.

\smallskip

 A random variable $r:\Omega \to \{-1, 1\}$ is called a~{\it Rademacher variable} if 
 $$
 \mathbb P(r=1) = \mathbb P(r=-1) = \frac 12.
 $$

\begin{definition}[Paley-Walsh martingale]\label{def:Paley-Walsh}
 Let $X$ be a Banach space. A discrete $X$-valued martingale $(f_n)_{n\geq 0}$ is called Paley-Walsh if $f_0$ is a constant, and if there exist a sequence of independent Rademacher variables $(r_n)_{n\geq 1}$, a function $\phi_n:\{-1, 1\}^{n-1}\to X$ for each $n\geq 2$ and $\phi_1 \in X$ such that $df_n = r_n \phi_n(r_1,\ldots, r_{n-1})$ for each $n\geq 2$ and $df_1 = r_1 \phi_1$.
\end{definition}

\begin{remark}\label{rem:UMDthrough-11trans}
Let $X$ be a Banach space over $\mathbb R$, $1<p<\infty$. Then $\beta_{p, X}$ can be represented as a norm of a certain operator acting on $\mathcal M_{p}^{\rm dis}$ (which is finite if and only if $X$ has the UMD property). Namely, if we fix a~Paley-Walsh filtration $\mathbb F  = (\mathcal F_n)_{n\geq 0}$ generated by a countable sequence of independent Rademacher random variables and fix a martingale transform $T_{\{-1,1\}}$ acting on an~$X$-valued $\mathbb F$-martingale $f=(f_n)_{n\geq 0}$ as follows:
\[
 d(T_{\{-1,1\}}f)_{n} = (-1)^n df_n,\;\;\; n\geq 0,
\]
then $\|T_{\{-1,1\}}\|_{\mathcal L(\mathcal M^{\rm dis}_p(X))} = \beta_{p,X}$ (see \cite{Burk84} and Theorem \ref{thm:Atransform} below).
\end{remark}

\section{UMD$_p^A$ constants}

\subsection{Definition and basic properties}
Let $X$ be a Banach space over the scalar field 
$\mathbb K$, $1<p<\infty$, $A \subset \mathbb K$ (recall that $\mathbb K$ is either $\mathbb R$ or $\mathbb C$). Then we define the {\it UMD$_p^A$ constant} $\beta_{p,X}^A\in [0,\infty]$ of $X$ as the least nonnegative number $\beta$ such that for each $L^p(\Omega; X)$-valued martingale difference sequence $(d_n)_{n=1}^N$ and any sequence $(\eps_n)_{n=1}^N$ with values in $A$
\begin{equation}\label{eq:UMD_p^Aconstant}
 \Bigl(\mathbb E \Bigl\|\sum_{n=1}^N \eps_n d_n\Bigr\|^p\Bigr)^{\frac 1p} \leq \beta  \Bigl(\mathbb E \Bigl\|\sum_{n=1}^N d_n\Bigr\|^p\Bigr)^{\frac 1p}.
\end{equation}

Let us outline basic properties of UMD$_p^A$ constants. Recall that
\begin{itemize}
 \item $\textnormal{Conv}(A)$ is the convex hull of a set $A$,
 \item $\overline{A}$ is a closure of a set $A$,
 \item $\textnormal{Diam}(A)$ is the diameter of $A$, i.e.\ $\sup_{a_1, a_2 \in A} |a_1-a_2|$,
 \item $\text{Conj}(A):= \{\bar{a}:a\in A\}$ is the conjugate of $A$,
 \item $A_1+A_2 = \{a\in \mathbb K: a=a_1+a_2\; \textnormal{for some} \;a_1\in A_1, a_2\in A_2\}$ is the Minkowski sum of sets $A_1$, $A_2 \subset \mathbb K$.
\end{itemize}

\begin{proposition}\label{prop:propertiesofUMP_p,X^A}
 Let $X$ be a Banach space, $1<p<\infty$, $A$, $A_1$, $A_2  \subset \mathbb K$, $b$, $B\in \mathbb R$ be such that $b<B$. Then the following holds:
 \begin{itemize}
  \item [(i)] $\beta_{p,X}^{aA}=\beta_{p,X}^{a\textnormal{Conj}(A)} = |a|\beta_{p,X}^{A}$ for each $a\in \mathbb K$. In particular, we have that $\beta_{p,X}^{-A} = \beta_{p,X}^{\textnormal{Conj}(A)} = \beta_{p,X}^{A}$.
  \item [(ii)] $\beta_{p,X}^A = \beta_{p,X}^{\overline{\textnormal{Conv}(A)}}$.
  \item [(iii)] If $\overline{\textnormal{Conv}(A_1)} \subset \overline{\textnormal{Conv}(A_2)}$, then $\beta_{p,X}^{A_1} \leq \beta_{p,X}^{A_2}$. In particular, if $A_1 \subset A_2$, then $\beta_{p,X}^{A_1} \leq \beta_{p,X}^{A_2}$.
  \item [(iv)] If $A$ is unbounded, then $\beta_{p,X}^A = \infty$.
  \item [(v)] If $A = \{a\}$ consists of one point, then $\beta_{p,X}^A = |a|$.
  \item [(vi)] $\beta_{p,X}^{ {A_1+A_2}} \leq \beta_{p,X}^{A_1} + \beta_{p,X}^{A_2}$.
  \item [(vii)] If $A$ is bounded, then
  \begin{equation}\label{eq:estimatesforUMD_p,X^AviaUMDconstant}
   \max\Bigl\{\frac{1}{\pi} \textnormal{Diam}(A) \beta_{p,X}, \sup_{a\in A} |a|\Bigr\} \leq\beta_{p,X}^A \leq \sup_{a\in A} |a| \beta_{p, X},\;\;\; \text{if } \mathbb K = \mathbb C,
  \end{equation}
   \begin{equation}\label{eq:estimatesforUMD_p,X^b,B}
   \max\Bigl\{\frac {B-b}{2}\beta_{p,X}, |b|, |B|\Bigr\} \leq   \beta_{p,X}^{A} \leq \frac {B-b}{2}\beta_{p,X} + \frac {|B+b|}{2},\;\;\; \text{if } \mathbb K = \mathbb R,
  \end{equation}
  where $b = \inf_{a\in A}a$, $B = \sup_{a\in A}a$ in the case $\mathbb K = \mathbb R$.
In particular, if $A$ is bounded and contains at least two points, then $\beta_{p,X}^A$ is finite if and only if $\beta_{p,X}$ is finite, i.e.\ if and only if $X$ has the UMD property.
\item[(viii)] For any scalar field $\mathbb K$
 \begin{equation}\label{eq:thm:bddnessofpiimpUMD1}
\max\Bigl\{\frac {B-b}{2}\beta_{p,X}^{\{-1,1\}}, |b|, |B|\Bigr\} \leq   \beta_{p,X}^{\{b,B\}} \leq \frac {B-b}{2}\beta_{p,X}^{\{-1,1\}} + \frac {|B+b|}{2}.
 \end{equation}
 \item[(ix)]  $\beta_{p,X}^A = \beta_{p',X^*}^A$.
 \item[(x)] Let $(A_n)_{n\geq 1}$ be such that $A_n\subset \mathbb K$ for each $n\geq 1$. If $(A_n)_{n\geq 1}$ is increasing and $\cup_n A_n =A$, then $\beta_{p, X}^{A_n}\nearrow \beta_{p,X}^A$ as $n\to \infty$. If $(A_n)_{n\geq 1}$ is decreasing, the sets $(A_n)_{n\geq 1}$ are bounded, and $\cap_n A_n= A$, then $\beta_{p, X}^{A_n}\searrow \beta_{p,X}^A$ as $n\to \infty$.
 \end{itemize}
\end{proposition}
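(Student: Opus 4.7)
The ten items split naturally into four groups.

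\emph{Direct manipulations of the definition (i), (iv), (v), (vi).} Scaling $\eps_n\mapsto a\eps_n$ multiplies both sides of \eqref{eq:UMD_p^Aconstant} by $|a|$, which gives (i); the conjugation identity follows by transferring the problem to the complex-conjugate Banach space $\overline X$, which has the same UMD structure as $X$. Item (iv) is witnessed by $N=1$ with $d_1\ne 0$ a nonzero constant and $|\eps_1|\to\infty$. Item (v) reduces to the identity $\sum a d_n=a\sum d_n$. For (vi), decompose $\eps_n=\alpha_n+\beta_n$ with $\alpha_n\in A_1$, $\beta_n\in A_2$, and apply the triangle inequality to the partial transforms.

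\emph{Convexification, closure, and monotonicity (ii), (iii), (x).} The inequality $\beta_{p,X}^A\le\beta_{p,X}^{\overline{\textnormal{Conv}(A)}}$ is immediate from $A\subset\overline{\textnormal{Conv}(A)}$. For the reverse, given $\eps_n\in\textnormal{Conv}(A)$ realize $\eps_n=\E\xi_n$ for independent $A$-valued $\xi_n$ on an auxiliary probability space; applying \eqref{eq:UMD_p^Aconstant} pathwise in the auxiliary variable and integrating via Fubini and Jensen transfers the bound from $(\xi_n)$ to $(\eps_n)$. Continuity in the coefficients extends from $\textnormal{Conv}(A)$ to its closure, finishing (ii), whence (iii). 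For (x), the increasing case follows from (iii) plus the observation that any finite family $\eps_1,\ldots,\eps_N\in A$ eventually lies in $A_n$; the decreasing case invokes compactness of the bounded closed sets $A_n$ to extract from near-optimal test sequences a convergent subsequence with limit in $\bigcap_n A_n=A$, and passes \eqref{eq:UMD_p^Aconstant} to the limit by continuity.

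\emph{Bounds involving $\beta_{p,X}$ and duality (vii), (viii), (ix).} The upper bounds in (vii) and (viii) follow from (i), (ii), (iii), and (vi) combined with the affine decomposition $A\subset\tfrac{B-b}{2}[-1,1]+\{\tfrac{B+b}{2}\}$ in the real case (or the inclusion $A\subset\{|z|\le\sup_{a\in A}|a|\}$ in the complex case). The $\max\{|b|,|B|\}$ (resp.\ $\sup_{a\in A}|a|$) lower bound is an immediate consequence of (iii) and (v). Duality (ix) follows from the pairing $\langle\sum\eps_n d_n,\sum d_n^*\rangle=\langle\sum d_n,\sum\eps_n d_n^*\rangle$ with $(d_n^*)\subset L^{p'}(\Omega;X^*)$ a martingale difference sequence, together with the identification $\mathcal M^{\textnormal{dis}}_p(X)^*=\mathcal M^{\textnormal{dis}}_{p'}(X^*)$, which interchanges the suprema defining $\beta_{p,X}^A$ and $\beta_{p',X^*}^A$.

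\emph{Main obstacle.} The sharp lower bound $\tfrac{B-b}{2}\beta_{p,X}^{\{-1,1\}}\le\beta_{p,X}^{\{b,B\}}$ in (viii) (and the complex analogue with factor $1/\pi$ in (vii)) is the technical heart of the proposition. The direct substitution $\eps_n=\tfrac{B-b}{2}\delta_n+\tfrac{B+b}{2}$ only yields $\beta_{p,X}^{\{b,B\}}\ge\tfrac{B-b}{2}\beta_{p,X}^{\{-1,1\}}-\tfrac{|B+b|}{2}$, as the shift term $\tfrac{B+b}{2}\sum d_n$ does not vanish. My plan is to decouple: work on a product probability space with an independent copy $(d_n')$ of $(d_n)$, and consider the two $\{b,B\}$-transforms obtained by assigning $\tfrac{B+b}{2}\pm\tfrac{B-b}{2}\delta_n$ to the two copies in opposite orders. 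Their difference equals $(B-b)\sum\delta_n(d_n-d_n')$, in which the common shift cancels, and Jensen applied in the auxiliary variable ($\E d_n'=0$) yields $\|\sum\delta_n(d_n-d_n')\|_p\ge\|\sum\delta_n d_n\|_p$. Taking suprema gives $\beta_{p,X}^{\{b,B\}}\ge c(B-b)\beta_{p,X}^{\{-1,1\}}$ for an explicit constant $c$ that must then be sharpened, using the $\max\{|b|,|B|\}$ bound to absorb the loss in the regime where $|B+b|$ is large, to reach exactly $\tfrac{B-b}{2}$. The complex lower bound in (vii) requires in addition the classical comparison between the complex UMD constant (defined over the full unit circle) and the real one, which contributes the factor $1/\pi$.
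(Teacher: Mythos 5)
Most of your proposal tracks the paper's (largely one-line) proofs: items (i)--(vi), the upper bounds and the trivial lower bounds in (vii)--(viii), the duality in (ix), and the increasing case of (x) are all handled essentially as in the paper (your randomization-plus-Jensen proof of (ii) is just the probabilistic rephrasing of the paper's finite convex combination and triangle inequality). Two small remarks: in the decreasing case of (x), ``extracting a convergent subsequence from near-optimal test sequences'' is problematic because the test pairs for different $n$ have unbounded length and live on different spaces; the workable version, which is what the paper does, is to note that compactness gives $A_n\subset A+\delta\mathbb D$ for large $n$ and then apply (i) and (vi) to get $\beta_{p,X}^{A_n}\leq\beta_{p,X}^{A}+\delta\beta_{p,X}^{\mathbb D}$.

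The genuine gap is in what you correctly single out as the technical heart: the lower bound $\beta_{p,X}^{\{b,B\}}\geq\frac{B-b}{2}\beta_{p,X}^{\{-1,1\}}$. Your decoupling scheme loses a multiplicative constant (the two transforms are controlled by $\beta_{p,X}^{\{b,B\}}\|\sum(d_n+d_n')\|_p$, and $\|\sum(d_n+d_n')\|_p$ can only be bounded by $2\|\sum d_n\|_p$, on top of the factor $2$ already spent in the triangle inequality), and this loss is \emph{not} a function of $|B+b|$: it persists even for $b=-B$, where the auxiliary bound $\max\{|b|,|B|\}=B$ is far weaker than the target $B\beta_{p,X}^{\{-1,1\}}$. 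So the proposed regime-splitting cannot recover the sharp constant. The paper's argument needs no independent copy: for a $\{-1,1\}$-valued sequence $(\eps_n)$, both $\frac{b+B}{2}+\frac{B-b}{2}\eps_n$ and $\frac{b+B}{2}-\frac{B-b}{2}\eps_n$ are admissible $\{b,B\}$-valued coefficient sequences for the \emph{same} martingale difference sequence $(d_n)$, and half their difference is exactly $\frac{B-b}{2}\eps_n$; hence
\begin{equation*}
\frac{B-b}{2}\Bigl\|\sum_n\eps_n d_n\Bigr\|_p\leq\frac12\Bigl\|\sum_n\Bigl(\tfrac{b+B}{2}+\tfrac{B-b}{2}\eps_n\Bigr)d_n\Bigr\|_p+\frac12\Bigl\|\sum_n\Bigl(\tfrac{b+B}{2}-\tfrac{B-b}{2}\eps_n\Bigr)d_n\Bigr\|_p\leq\beta_{p,X}^{\{b,B\}}\Bigl\|\sum_n d_n\Bigr\|_p,
\end{equation*}
the shift cancelling exactly and with no constant lost. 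The same computation applied to a diametral pair $a_1,a_2\in\overline{\textnormal{Conv}(A)}$ gives $\beta_{p,X}^{A}\geq\frac{\textnormal{Diam}(A)}{2}\beta_{p,X}^{\{-1,1\}}$, and the factor $\frac1\pi$ in (vii) then comes, as you anticipated, from the comparison $\beta_{p,X}^{\{-1,1\}}\geq\frac2\pi\beta_{p,X}^{\mathbb D}=\frac2\pi\beta_{p,X}$ of \cite[Proposition 4.2.10]{HNVW1}. You should replace the decoupling step by this reflection/averaging identity.
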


For the proof we will need the following technical lemma, proven in \cite[Proposition 4.2.10]{HNVW1}. Recall that $\mathbb D = \{a\in \mathbb C:|a|\leq 1\}$ is the unit disk in $\mathbb C$. 

\begin{lemma}\label{UMDconstant-11andDcomparison}
 Let $\mathbb K = \mathbb C$. Then
 \[
  \beta_{p,X}^{\{-1,1\}} \leq \beta_{p,X}^{\mathbb D} = \beta_{p,X}\leq \frac{\pi}{2}\beta_{p,X}^{\{-1,1\}}.
 \]
\end{lemma}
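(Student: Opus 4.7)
The three relations split cleanly. The left inequality $\beta_{p,X}^{\{-1,1\}} \leq \beta_{p,X}^{\mathbb D}$ is immediate from the monotonicity statement in Proposition \ref{prop:propertiesofUMP_p,X^A}(iii), since $\{-1,1\}\subset \mathbb D$. For the middle equality, recall that in the complex setting the definition \eqref{eq:defofUMDconstant} of $\beta_{p,X}$ ranges the coefficients over the unit circle $\partial \mathbb D$, i.e.\ $\beta_{p,X} = \beta_{p,X}^{\partial \mathbb D}$. Since $\overline{\textnormal{Conv}(\partial \mathbb D)} = \mathbb D$, Proposition \ref{prop:propertiesofUMP_p,X^A}(ii) gives $\beta_{p,X}^{\partial \mathbb D} = \beta_{p,X}^{\mathbb D}$, establishing the equality.

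The genuine content is the right inequality $\beta_{p,X}^{\mathbb D} \leq \tfrac{\pi}{2}\beta_{p,X}^{\{-1,1\}}$. By (ii) again, it suffices to bound $\beta_{p,X}^{\partial \mathbb D}$. The key ingredient is the integral identity
\[
e^{i\phi} \;=\; \frac{\pi}{2}\int_0^{2\pi} \textnormal{sign}\bigl(\cos(\theta-\phi)\bigr)\, e^{i\theta}\, \frac{\dd\theta}{2\pi}, \qquad \phi \in \R,
\]
which I would verify directly: after the substitution $\psi = \theta-\phi$, the sign function equals $+1$ on $(-\pi/2,\pi/2)$ and $-1$ on $(\pi/2, 3\pi/2)$, and a routine calculation of the resulting complex integral gives $\tfrac{2}{\pi}e^{i\phi}$, yielding the identity after multiplying by $\tfrac{\pi}{2}$.

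Given this representation, fix an arbitrary $X$-valued martingale difference sequence $(d_n)_{n=1}^N$ and unimodular constants $\eps_n = e^{i\phi_n}$. Inserting the integral for each $\eps_n$ and interchanging sum and integral, we obtain
\[
\sum_{n=1}^N \eps_n d_n \;=\; \frac{\pi}{2}\int_0^{2\pi} e^{i\theta} \sum_{n=1}^N \textnormal{sign}\bigl(\cos(\theta-\phi_n)\bigr)\, d_n\, \frac{\dd\theta}{2\pi}.
\]
Since $|e^{i\theta}|=1$, Minkowski's integral inequality in $L^p(\Omega;X)$ yields
\[
\Bigl(\E\Bigl\|\sum_{n=1}^N \eps_n d_n\Bigr\|^p\Bigr)^{\!1/p} \leq \frac{\pi}{2}\int_0^{2\pi}\Bigl(\E\Bigl\|\sum_{n=1}^N \textnormal{sign}\bigl(\cos(\theta-\phi_n)\bigr)\, d_n\Bigr\|^p\Bigr)^{\!1/p}\frac{\dd\theta}{2\pi}.
\]
For each fixed $\theta$, the coefficients $\textnormal{sign}(\cos(\theta-\phi_n))$ lie in $\{-1,1\}$, so the inner $L^p$-norm is controlled by $\beta_{p,X}^{\{-1,1\}}\bigl(\E\|\sum_n d_n\|^p\bigr)^{1/p}$, and integrating out $\theta$ closes the estimate.

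The only step requiring any thought is the verification of the integral identity; all other ingredients are the formal properties of $\beta_{p,X}^A$ already recorded in Proposition \ref{prop:propertiesofUMP_p,X^A} together with Minkowski's inequality. The factor $\tfrac{\pi}{2}$ arises as the reciprocal of the Fourier coefficient $\tfrac{2}{\pi}$ of the square wave $\textnormal{sign}(\cos\psi)$ at the frequency $e^{i\psi}$, which geometrically reflects how efficiently one can represent points on $\partial \mathbb D$ as averages of $\pm 1$-weighted unimodular phases.
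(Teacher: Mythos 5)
Your proof is correct. The paper gives no argument of its own for this lemma --- it simply cites \cite[Proposition 4.2.10]{HNVW1} --- and your averaging argument (representing $e^{i\phi}$ as $\tfrac{\pi}{2}$ times the mean of $\textnormal{sign}(\cos(\theta-\phi))e^{i\theta}$, then applying Minkowski's integral inequality and the $\{-1,1\}$-bound pointwise in $\theta$) is precisely the standard proof from that reference, combined correctly with parts (ii) and (iii) of Proposition \ref{prop:propertiesofUMP_p,X^A} for the other two relations. The only (harmless) point worth a word is that $\textnormal{sign}(\cos(\theta-\phi_n))=0$ for finitely many $\theta$, a null set that does not affect the integral estimate.
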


\begin{proof}[Proof of Proposition \ref{prop:propertiesofUMP_p,X^A}]
 (i), (iv) and (v) follow directly from the definition given in \eqref{eq:UMD_p^Aconstant}, while (ii) and (vi) follow from \eqref{eq:UMD_p^Aconstant} and the triangle inequality. (iii) can be derived from (ii) and \eqref{eq:UMD_p^Aconstant}. Let us show (vii)-(x).
 
 (vii): First assume that $\mathbb K = \mathbb C$. The inequality $\beta_{p,X}^A \leq \sup_{a\in A} |a| \beta_{p, X}$ follows from (i), (iii), the fact that $A \subset  \sup_{a\in A}|a|\mathbb D$, where $\mathbb D$ is the unit disk in $\mathbb C$, and the fact that $\beta_{p,X}^{\mathbb D} = \beta_{p,X}$ by \eqref{eq:defofUMDconstant}. The inequality $\beta_{p,X}^A \geq \sup_{a\in A} |a|$ follows from~\eqref{eq:UMD_p^Aconstant}. 
 
 Let us now show that $\beta_{p,X}^A \geq \frac{1}{\pi} \textnormal{Diam}(A) \beta_{p,X}$. Due to (ii) we may assume that $A$ is convex and closed. Let $a_1, a_2 \in A$ be such that $|a_1-a_2| = \textnormal{Diam}(A)$. Then by (iii) we know that $\beta_{p, X}^A \geq \beta_{p,X}^{\{a_1,a_2\}}$. Further, $\beta_{p,X}^{\{a_1,a_2\}} \geq \frac{|a_1-a_2|}{2}\beta_{p,X}^{\{-1,1\}}$. Indeed, for each $L^p(\Omega; X)$-valued martingale difference sequence $(d_n)_{n=1}^N$ and any $\{-1,1\}$-valued sequence $(\eps_n)_{n=1}^N$

 \begin{equation}\label{eq:a_1a_2lowerbound}
  \begin{split}
   \frac {|a_1-a_2|}{2}\Bigl(\mathbb E \Bigl\|\sum_{n=1}^N \eps_n d_n\Bigr\|^p\Bigr)^{\frac 1p} &= \Bigl(\mathbb E \Bigl\|\sum_{n=1}^N \frac {a_1-a_2}{2}\eps_n d_n\Bigr\|^p\Bigr)^{\frac 1p}\\
&=\Bigl(\mathbb E \Bigl\|\sum_{n=1}^N \frac 12\Bigl(\frac {a_1-a_2}{2}\eps_n + \frac {a_1+a_2}{2}\Bigr) d_n\\
&\quad+ \sum_{n=1}^N \frac 12\Bigl(\frac {a_1-a_2}{2}\eps_n - \frac {a_1+a_2}{2}\Bigr) d_n\Bigr\|^p\Bigr)^{\frac 1p}\\
&\leq\frac 12\Bigl(\mathbb E \Bigl\|\sum_{n=1}^N \Bigl(\frac {a_1+a_2}{2}+\frac {a_1-a_2}{2}\eps_n\Bigr) d_n\Bigr\|^p\Bigr)^{\frac 1p}\\
&\quad+\frac 12\Bigl(\mathbb E \Bigl\| \sum_{n=1}^N \Bigl(\frac {a_1+a_2}{2} -\frac {a_1-a_2}{2}\eps_n \Bigr) d_n\Bigr\|^p\Bigr)^{\frac 1p}\\
&\stackrel{(*)}\leq \beta_{p,X}^{\{a_1,a_2\}}\Bigl(\mathbb E \Bigl\|\sum_{n=1}^N d_n\Bigr\|^p\Bigr)^{\frac 1p},
  \end{split}
 \end{equation}
where $(*)$ follows from the definition of $\beta_{p,X}^{\{a_1,a_2\}}$ and the fact that 
$$
\frac {a_1+a_2}{2}+\frac {a_1-a_2}{2}\eps, \frac {a_1+a_2}{2} -\frac {a_1-a_2}{2}\eps \in \{a_1, a_2\}
$$
for each $\eps\in \{-1,1\}$. By Lemma \ref{UMDconstant-11andDcomparison} we have that $\beta_{p,X}^{\{-1,1\}} \geq \frac {2}{\pi}\beta_{p,X}$, therefore due to all the estimates above
\[
 \beta_{p,X}^A \geq \beta_{p,X}^{\{a_1,a_2\}} \geq\frac{\textnormal{Diam}(A)}{2}\beta_{p,X}^{\{-1,1\}} \geq \frac{1}{\pi} \textnormal{Diam}(A) \beta_{p,X}.
\]

Now turn to the proof of \eqref{eq:estimatesforUMD_p,X^b,B}. The lower bound of $\beta_{p,X}^{\{b,B\}}$ follows analogously the lower estimate of $\beta_{p,X}^{A}$ in \eqref{eq:estimatesforUMD_p,X^AviaUMDconstant}, but we now use that $\beta_{p,X}^{\{-1,1\}} = \beta_{p,X}$ since $\mathbb K = \mathbb R$. The upper bound follows analogously the upper bound in \cite[Theorem~4.1]{Choi92}.
 
 (viii) can be shown analogously \eqref{eq:estimatesforUMD_p,X^b,B}.
 
 (ix): The proof is similar to the one of \cite[Proposition 4.2.17(2)]{HNVW1}.
 
 (x): Without loss of generality by (ii) we can assume that the sets $(A_n)_{n\geq 1}$ are convex and closed. Moreover, we can assume that all the sets are bounded (otherwise the statement is evident). Let us prove the first part, the second will follow analogously. From the one side, by (iii) $\beta_{p, X}^{A_n}\leq \beta_{p,X}^A$ for each $n\geq 1$. From the other side, by compactness of $(A_n)_{n\geq 1}$ and $\overline A$ for any fixed $\delta>0$ there exists $N_{\delta}\geq 1$ such that $A\subset A_n + \delta \mathbb D$ for each $n\geq N_{\delta}$ (recall that $\mathbb D$ is the unit disk in $\mathbb C$). Then by (i) and (vi), for each $n\geq N_{\delta}$
 \[
  \beta_{p,X}^A \leq \beta_{p,X}^{A_n} + \delta \beta_{p,X}^{\mathbb D}.
 \]
Due to (iii), the sequence $(\beta_{p,X}^{A_n})_{n\geq 1}$ is monotone, so according to the arguments above it must converge to $\beta_{p,X}^A$.
\end{proof}

\begin{remark}\label{rem:beta-11vsbetaD}
 The inequality presented in Lemma \ref{UMDconstant-11andDcomparison} is not known to be sharp. Moreover, it is an open problem, which was posed to the author by Jan van Neerven and Mark Veraar, whether one has that $\beta_{p, X}^{\mathbb D} = \beta_{p,X}^{\{-1,1\}}$. The issue is not only in the proper theoretical proof or disproof of the equality $\beta_{p, X}^{\mathbb D} = \beta_{p,X}^{\{-1,1\}}$, but also in the fact that the UMD constants $\beta_{p, X}^{\mathbb D}$ and $\beta_{p,X}^{\{-1,1\}}$ are not properly computed except the case of $L^q(S)$ for $q\in [p,2]$ if $p\leq 2$, and $q\in[2,p]$ if $p\geq 2$ (then both $\beta_{p, X}^{\mathbb D}$ and $\beta_{p,X}^{\{-1,1\}}$ are known to be $p^*-1$, see \cite[Proposition 4.2.22]{HNVW1}; notice that this includes the case of $L^p(S)$ and the Hilbert space-case).

 This question is connected to a precise geometric definition of $\beta_{p, X}^{\mathbb D}$ and $\beta_{p,X}^{\{-1,1\}}$ (and $\beta_{p,X}^A$ for a general set $A\subset \mathbb K$) in terms of the unit ball of $X$; then both $\beta_{p, X}^{\mathbb D}$ and $\beta_{p,X}^{\{-1,1\}}$ would be efficiently computable on a machine with an acceptable accuracy (for a simple or well-discovered finite dimensional space $X$, e.g.\ $X = \ell^{\infty}_n$), and the problem could be solved. 
 
 Concerning this problem we wish to notice that in the case of $A = \{0,1\}$ and $X$ being a Hilbert space, $\beta_{p,X}^{A}$ has a series representation (see \cite[Theorem 4.3]{Choi92}). It remains open whether such a series representation is possible for a general space $X$ or for a general set $A$.
\end{remark}

The following theorem is similar to Remark \ref{rem:UMDthrough-11trans} and \cite[Lemma 2.1]{Burk84}, and illustrates that $\beta_{p,X}^A$ can be expressed as a norm of a~certain martingale transform. Recall that $\textnormal{Ext}(A)$ is the set of all extreme points of a convex set $A$, i.e.\ all the points $a$ of $A$ such that $A \setminus \{a\}$ remains convex. Notice that $\text{Conv}(\text{Ext}(A)) = A$.

\begin{theorem}[$A$-martingale transform]\label{thm:Atransform}
  Let $1<p<\infty$, $X$ be a Banach space, $A\subset \mathbb K$ be closed, convex, and bounded. Let $(a'_n)_{n\geq 0}$ be a sequence with values in $\textnormal{Ext}(A)$ such that $(a'_n)_{n\geq N}$ is dense in $\textnormal{Ext}(A)$ for each $N\geq 0$. Consider a linear operator $T_A$ on $\mathcal M^{\rm dis}_p(X)$ such that for each $X$-valued $L^p$-martingale $f=(f_n)_{n\geq 0}$ one has that
 \[
  (T_A f)_n = a'_0f_0 + \sum_{i=1}^n a'_idf_i,\;\;\; n\geq 0.
 \]
Assume that there exists a sequence $(r_n)_{n\geq 1}$  of independent Rademacher random variables such that $r_n$ is $\mathcal F_n$-measurable and independent of $\mathcal F_{n-1}$ for each $n\geq 1$. Then $\|T_A\|_{\mathcal L(\mathcal M_p^{\rm dis}(X))}= \beta_{p,X}^A$.
\end{theorem}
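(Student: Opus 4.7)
The plan is to prove the two inequalities separately. The bound $\|T_A\|_{\mathcal L(\mathcal M_p^{\rm dis}(X))} \leq \beta_{p,X}^A$ is essentially immediate from the defining inequality \eqref{eq:UMD_p^Aconstant}, while the reverse direction requires a Paley--Walsh reduction combined with an embedding of a near-extremal martingale into the filtration $\mathbb F$ at times $n_k$ chosen so that the prescribed deterministic multipliers $a'_{n_k}$ approximate the desired extremal multipliers $\eps_k$.

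For the upper bound, fix $f \in \mathcal M_p^{\rm dis}(X)$ and $N \geq 0$. Since $(a'_n)_{n\geq 0}$ takes values in $\textnormal{Ext}(A) \subset A$, applying \eqref{eq:UMD_p^Aconstant} to the finite martingale difference sequence $(f_0, df_1, \ldots, df_N)$ with the deterministic $A$-valued multipliers $(a'_0, a'_1, \ldots, a'_N)$ yields $\|(T_A f)_N\|_{L^p(\Omega;X)} \leq \beta_{p,X}^A \|f_N\|_{L^p(\Omega;X)}$; letting $N \to \infty$ in $\mathcal M_p^{\rm dis}(X)$ gives $\|T_A\|_{\mathcal L(\mathcal M_p^{\rm dis}(X))} \leq \beta_{p,X}^A$.

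For the lower bound, fix $\delta>0$. Since $A$ is closed, convex and bounded in $\mathbb K$, it is compact, so by Krein--Milman $A = \overline{\textnormal{Conv}(\textnormal{Ext}(A))}$; hence Proposition \ref{prop:propertiesofUMP_p,X^A}(ii) gives $\beta_{p,X}^A = \beta_{p,X}^{\textnormal{Ext}(A)}$. Therefore there exist $N\geq 1$, a martingale difference sequence $(d_k)_{k=1}^N$ and extreme points $\eps_1, \ldots, \eps_N \in \textnormal{Ext}(A)$ with
\[
\Bigl(\mathbb E\Bigl\|\sum_{k=1}^N \eps_k d_k\Bigr\|^p\Bigr)^{1/p} \geq (\beta_{p,X}^A - \delta)\Bigl(\mathbb E\Bigl\|\sum_{k=1}^N d_k\Bigr\|^p\Bigr)^{1/p}.
\]
By the standard Paley--Walsh reduction (cf.\ Remark \ref{rem:UMDthrough-11trans} and \cite[Proposition 4.2.3]{HNVW1}), we may assume $d_k = r_k \phi_k(r_1, \ldots, r_{k-1})$ for a Rademacher sequence $(r_k)$ and measurable $\phi_k$. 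Next, fix $\delta'>0$ and use the density of $(a'_n)_{n\geq M}$ in $\textnormal{Ext}(A)$ (for every $M\geq 0$) to inductively pick indices $1\leq n_1 < n_2 < \ldots < n_N$ with $|a'_{n_k}-\eps_k|<\delta'$. In the filtration $\mathbb F$, define an $\mathbb F$-martingale $\tilde f$ with $\tilde f_0 = 0$, $d\tilde f_j := r_{n_k}\phi_k(r_{n_1}, \ldots, r_{n_{k-1}})$ if $j = n_k$ for some $k$, and $d\tilde f_j := 0$ otherwise; the measurability and independence hypotheses on $(r_n)$ ensure that this is a genuine martingale difference. Since $(r_{n_1}, \ldots, r_{n_N})$ is i.i.d.\ Rademacher, $\tilde f_{n_N}$ and $\sum_k d_k$ are equidistributed, and likewise $\sum_k \eps_k d\tilde f_{n_k}$ and $\sum_k \eps_k d_k$. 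Writing
\[
(T_A \tilde f)_{n_N} = \sum_{k=1}^N \eps_k d\tilde f_{n_k} + \sum_{k=1}^N (a'_{n_k}-\eps_k) d\tilde f_{n_k},
\]
and bounding the $L^p$-norm of the second sum by $\delta' \beta_{p,X}\|\tilde f_{n_N}\|_{L^p(\Omega;X)}$ (valid when $X$ is UMD; otherwise $\beta_{p,X}^A = \infty$ by Proposition~\ref{prop:propertiesofUMP_p,X^A}(vii) and a two-point subset argument of the same kind already shows $\|T_A\| = \infty$), we conclude $\|T_A\|_{\mathcal L(\mathcal M_p^{\rm dis}(X))} \geq \beta_{p,X}^A - \delta - \delta' \beta_{p,X}$; sending $\delta, \delta'\to 0$ finishes the proof.

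The main obstacle is the embedding step in the last paragraph: reproducing, inside the given $\mathbb F$ and using only the prescribed deterministic multipliers $(a'_n)$, a near-extremal configuration for $\beta_{p,X}^{\textnormal{Ext}(A)}$ with each difference paired against the ``right'' $a'_{n_k}$. The density of $(a'_n)_{n\geq M}$ for \emph{every} $M\geq 0$ is exactly what allows the indices $n_k$ to be chosen strictly increasing while keeping $|a'_{n_k}-\eps_k|$ arbitrarily small, and the hypothesized embedded Rademacher sequence is exactly what allows the Paley--Walsh building blocks to be realized inside $\mathbb F$.
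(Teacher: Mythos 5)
Your proof is correct and follows essentially the same route as the paper: the upper bound straight from the definition of $\beta_{p,X}^A$, and the lower bound via a Paley--Walsh reduction, passage to extreme points, and an embedding of the near-extremal martingale into $\mathbb F$ at indices $n_k$ where $a'_{n_k}$ approximates the desired multipliers. Your two small variations are if anything improvements in precision: invoking Krein--Milman together with Proposition \ref{prop:propertiesofUMP_p,X^A}(ii) to get $\beta_{p,X}^A=\beta_{p,X}^{\textnormal{Ext}(A)}$ replaces the paper's explicit convex-decomposition/pigeonhole step, and your explicit $\delta'$-error term bounded by $\delta'\beta_{p,X}\|\tilde f_{n_N}\|_{L^p(\Omega;X)}$ makes rigorous what the paper compresses into ``an approximation argument.''
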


\begin{remark}\label{rem:endpointsofA}
The natural example of $A$ is a closed convex polygon with vertices $a_1,\ldots, a_N \in \mathbb C$. Then the set of vertices is exactly the set of extreme points, and $(a'_n)_{n\geq 0}$ from Theorem \ref{thm:Atransform} can be taken as a periodic sequence involving all the numbers $a_1,\ldots, a_N$.
\end{remark}

\begin{proof}[Proof of Theorem \ref{thm:Atransform}]
Without loss of generality assume that $A$ contains at least two points, and that $X$ has the UMD property (which guarantees that $T_A \neq cI$ for some $c\in \mathbb K$ and that $0<\beta_{p,X}^A<\infty$; otherwise by (v) and (vii) from Proposition~\ref{prop:propertiesofUMP_p,X^A} the same machinery can be applied in the case of $T_A = cI$ or $\beta_{p,X}^A = \infty$). Fix $\delta>0$. By the definition of $\beta_{p,X}^A$ there exists a filtration $\mathbb G = (\mathcal G_m)_{m\geq 1}$, an $X$-valued $L^p$-integrable $\mathbb G$-martingale difference sequence $(d_m)_{m\geq 0}$, a sequence $(\eps_m)_{m=0}^M$ with values in $A$ such that
\begin{equation}\label{eq:Atransform}
  \Bigl(\mathbb E \Bigl\|\sum_{m=1}^M \eps_m d_m\Bigr\|^p\Bigr)^{\frac 1p}>(\beta_{p,X}^A - \delta)\Bigl(\mathbb E \Bigl\|\sum_{m=1}^M d_m\Bigr\|^p\Bigr)^{\frac 1p}.
\end{equation}
By \cite[Theorem 3.6.1]{HNVW1}, $(d_m)_{m\geq 0}$ can be chosen of the Paley-Walsh type. Since $A$ is convex, closed and bounded, there exists $J\geq 1$ for each $m=1,\ldots,M$ there exist $\lambda_{m,1},\ldots, \lambda_{m,J} \in [0,1]$, $\eps_{m,1},\ldots, \eps_{m,J} \in \textnormal{Ext}(A)$ such that $\eps_m = \sum_{j=1}^J\lambda_{m,j}\eps_{m,j}$ with $\sum_{j=1}^J\lambda_{m,j}=1$. Then by the triangle inequality we have that
\begin{align*}
  \Bigl(\mathbb E \Bigl\|\sum_{m=1}^M \eps_m d_m\Bigr\|^p\Bigr)^{\frac 1p} \leq \sum_{i_1,\ldots,i_M = 1}^J \prod_{k=1}^M\lambda_{k,i_k} \Bigl(\mathbb E \Bigl\|\sum_{m=1}^M \eps_{m,i_m} d_m\Bigr\|^p\Bigr)^{\frac 1p},
\end{align*}
and therefore due to the fact that 
$$
\sum_{i_1,\ldots,i_M = 1}^J \prod_{k=1}^M\lambda_{k,i_k} = \prod_{k=1}^M\sum_{i_1,\ldots,i_M = 1}^J \lambda_{k,i_k}=1
$$ 
and the inequality \eqref{eq:Atransform}, there exists a $\{1,\ldots,J\}$-valued sequence $(\tilde i_m)_{m=1}^M$ such that
\[
  \Bigl(\mathbb E \Bigl\|\sum_{m=1}^M \eps_{m,\tilde i_m} d_m\Bigr\|^p\Bigr)^{\frac 1p}>(\beta_{p,X}^A - \delta)\Bigl(\mathbb E \Bigl\|\sum_{m=1}^M d_m\Bigr\|^p\Bigr)^{\frac 1p}.
\]

Now by an approximation argument and the fact that $(a'_n)_{n\geq N}$ is dense in $\textnormal{Ext}(A)$ for each $N\geq 0$ we can find a subsequence $(a'_{n_m})_{m=1}^{M}$ such that $a'_{n_{m}} = \eps_{m,\tilde i_m}$ for each $m=1,\ldots,M$. Then by the special property of the filtration $(\mathcal F_n)_{n\geq 0}$ (which contains a Paley-Walsh filtration) there exists a~Paley-Walsh $X$-valued \mbox{$\mathbb F$-mar}\-tin\-ga\-le difference sequence $(\tilde d_n)_{n\geq 0}$ such that $\tilde d_{n_m}$ has the same distribution as $d_m$ for each $m=1,\ldots,M$, and $\tilde d_{n} = 0$ if $n\notin\{n_1,\ldots,n_M\}$. Let $(f_n)_{n\geq 0}$ be an $\mathbb F$-martingale such that $df_n = \tilde d_n$ for each $n\geq 0$. Then by \eqref{eq:Atransform} we have that 
$$
(\mathbb E\|(T_Af)_{\infty}\|^p)^{\frac 1p} >(\beta_{p,X}^A - \delta)(\mathbb E\|f_{\infty}\|^p)^{\frac 1p}.
$$ 
Therefore since $\delta$ was arbitrary, $\|T_A\|_{\mathcal L(\mathcal M_p^{\rm dis}(X))}\geq  \beta_{p,X}^A$. $\|T_A\|_{\mathcal L(\mathcal M_p^{\rm dis}(X))}\leq \beta_{p,X}^A$ follows from the definition of $\beta_{p,X}^A$.
\end{proof}

\begin{remark}
 Notice that for a fixed space $X$ and fixed $1<p<\infty$ one has that $(b,B)\mapsto \beta_{p,X}^{\{b,B\}}$ is a convex function in $(b,B)\in \mathbb R^2$. Indeed, due to Theorem \ref{thm:Atransform} together with Remark \ref{rem:endpointsofA} (and also by Example \ref{ex:a_1a_dalphain02}) we know that there exists a fixed Banach space $Y$ and a fixed operator $T\in \mathcal L(Y)$, depending only on $X$ and $p$, such that 
 $$
 \beta_{p,X}^{\{b,B\}} = \Bigl\|\frac{B-b}{2}T + \frac{b+B}{2}I\Bigr\|_{\mathcal L(Y)},
 $$
which implies the convexity in $(b,B)$-variable due to the convexity of the norm $\|\cdot\|_{\mathcal L(Y)}$ and the linearity of the operator $(b,B) \mapsto \frac{B-b}{2}T + \frac{b+B}{2}I$ as an operator from $\mathbb R^2$ to $\mathcal L(Y)$.

In particular, due to the symmetry of the function $c\mapsto \beta_{p,X}^{\{c- a,c+ a\}}$ we deduce that $ \beta_{p,X}^{\{c- a,c+ a\}}\geq  \beta_{p,X}^{\{- a,a\}}$ for each $a, c \in \mathbb R$ and that $ \beta_{p,X}^{\{c- a,c+ a\}}$ monotonically increases in $c\geq 0$ for each fixed $a\in \mathbb R$. 
\end{remark}

\subsection{$A$-Burkholder function and $A$-weak differential subordination}

In the current subsection we introduce the $A$-Burkholder function and $A$-weak differential subordination, and show the main results of the subsection, Theorem \ref{thm:AWDSofdiscretemartingales} and \ref{thm:bBwdsforpurdisc} (the latter one is the main tool for proving Theorem \ref{thm:multupperbound}). Throughout this subsection $A\subset \mathbb K$ is convex, bounded, closed, and contains at least two points. We will start with the following proposition.

\begin{proposition}\label{prop:Choi-Burkfunction}
 Let $X$ be a Banach space, $1<p<\infty$, $A\subset \mathbb K$. Then the following statements are equivalent:
 \begin{itemize}
  \item[(i)] $X$ is a UMD Banach space;
  \item[(ii)] there exist $\beta>0$ and $U:X\times X \to \mathbb R$ such that $z\mapsto U(x+z, y+\eps z)$ is a~concave function in $z\in X$ for each $x, y\in X$ and $\eps \in A$, and
  \[
   U(x,y)\geq \|y\|^p-\beta^p \|x\|^p,\;\;\; x,y\in X.
  \]
 \end{itemize}
 If this is the case, then the smallest admissible $\beta$ equals the UMD$_p^{A}$ constant $\beta_{p,X}^A$.
\end{proposition}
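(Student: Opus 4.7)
The plan is to prove the two implications separately: (ii)$\Ra$(i) by a direct martingale argument combined with a scaling trick to kill $U(0,0)$, and (i)$\Ra$(ii) by defining $U$ as a Burkholder-type supremum of a payoff over all admissible $A$-transformed martingale pairs. The sharpness of the optimal $\beta$ then falls out of the two halves.

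For (ii)$\Ra$(i), I would fix an $X$-valued martingale difference sequence $(d_n)_{n=1}^{N}$ and a sequence $(\eps_n)_{n=1}^N\subset A$, and set $f_n:=\sum_{k\leq n}d_k$, $g_n:=\sum_{k\leq n}\eps_k d_k$. The concavity of $z\mapsto U(f_{n-1}+z,\,g_{n-1}+\eps_n z)$ combined with $\E[d_n\mid\F_{n-1}]=0$ gives the conditional Jensen inequality $\E[U(f_n,g_n)\mid\F_{n-1}]\leq U(f_{n-1},g_{n-1})$; iterating produces $\E U(f_N,g_N)\leq U(0,0)$, and the pointwise lower bound then yields $\E\|g_N\|^p - \beta^p \E\|f_N\|^p \leq U(0,0)$. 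To remove the constant $U(0,0)$ I would rerun the argument with the rescaled pair $(t f_n, t g_n)$ for $t>0$; after dividing by $t^p$ the inequality becomes
\[
\E\|g_N\|^p - \beta^p\E\|f_N\|^p \leq t^{-p} U(0,0) \to 0 \quad\text{as } t\to\infty,
\]
giving $\beta_{p,X}^A\leq \beta$.

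For (i)$\Ra$(ii), I would take $\beta:=\beta_{p,X}^A$ (finite by (i)) and define
\[
U(x,y) := \sup \E\bigl[\|y+g_\infty\|^p - \beta^p\|x+f_\infty\|^p\bigr],
\]
where the supremum runs over all pairs $(f_n,g_n)$ of $X$-valued $L^p$-martingales on any filtered probability space, starting at $0$, with $dg_n=\alpha_n\, df_n$ for some $A$-valued adapted sequence $(\alpha_n)$. The trivial zero pair gives the required lower bound $U(x,y)\geq \|y\|^p-\beta^p\|x\|^p$, and the minimality of $\beta$ in the definition of $\beta_{p,X}^A$ gives $U(0,0)=0$; finiteness of $U$ at every $(x,y)$ follows from Minkowski's inequality together with $\E\|g_\infty\|^p\leq \beta^p\E\|f_\infty\|^p$. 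To establish concavity of $z\mapsto U(x+z,y+\eps z)$ at $z=\lambda z_1+(1-\lambda)z_2$ I would glue two $\d$-near optimal pairs $(f^{i},g^{i})$ for $(x+z_i,y+\eps z_i)$, $i=1,2$, by means of an independent $\{1,2\}$-valued Bernoulli variable $\xi$ with $\P(\xi=1)=\lambda$. The new first step
\[
\xi_1 := (z_1-z)\one_{\{\xi=1\}}+(z_2-z)\one_{\{\xi=2\}}
\]
has mean zero, and pairing the $g$-side to $\eps\xi_1$ produces a legitimate martingale step with multiplier $\eps\in A$ shared by both branches; on $\{\xi=i\}$ the pair then continues as $(f^{i},g^{i})$. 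Splitting the payoff into $\lambda$ and $(1-\lambda)$ pieces and sending $\d\to 0$ delivers the concavity.

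The main obstacle is the gluing construction in (i)$\Ra$(ii): one must ensure that the Bernoulli first step is a genuine $A$-transformed martingale difference with the single prescribed multiplier $\eps$ common to both branches, and that the subsequent adapted concatenation of the two admissible pairs keeps the transform multipliers in $A$. Granted this, the claim on the sharp value of $\beta$ follows immediately: the argument for (ii)$\Ra$(i) shows $\beta_{p,X}^A\leq \beta$ for every admissible $\beta$, while the $U$ built in (i)$\Ra$(ii) exhibits $\beta=\beta_{p,X}^A$ as admissible, so that $\beta_{p,X}^A$ is exactly the smallest admissible constant.
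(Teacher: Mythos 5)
Your overall architecture coincides with the paper's: (ii)$\Rightarrow$(i) by conditional Jensen plus the scaling trick, and (i)$\Rightarrow$(ii) by defining $U$ as a Burkholder-type supremum and proving concavity via the Bernoulli gluing of two near-optimal pairs. Those parts are fine. The gap is in the one step that the paper itself singles out as the only non-routine point: the finiteness of $U$. Your claim that $U(x,y)<\infty$ ``follows from Minkowski's inequality together with $\E\|g_\infty\|^p\leq\beta^p\E\|f_\infty\|^p$'' fails for $p>1$. Setting $s=(\E\|f_\infty\|^p)^{1/p}$, Minkowski only yields the upper bound $(\|y\|+\beta s)^p-\beta^p\bigl((s-\|x\|)\vee 0\bigr)^p$ for the payoff, and this grows like $p\beta^{p-1}(\|y\|+\beta\|x\|)\,s^{p-1}\to\infty$ as $s\to\infty$: a difference of $p$-th powers whose $p$-th roots differ by a bounded amount is unbounded when $p>1$. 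So this estimate does not rule out $U\equiv+\infty$ off the diagonal, while (ii) requires $U$ to be real-valued.

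The correct argument (the paper's) uses the concavity you have already established, valid in the extended-real-valued sense. First, for $a\in A$ the map $x\mapsto U(x,ax)$ is concave and invariant under $x\mapsto -x$, and $U(0,0)\leq 0$ by the very definition of $\beta_{p,X}^A$; hence $U(x,ax)\leq 0$ for all $x$. Then, for arbitrary $(x,y)$ and two distinct $a_1,a_2\in A$, write $x=z+u$ and $y=a_1z+a_2u$ with $z=\frac{a_2x-y}{a_2-a_1}$ and $u=\frac{y-a_1x}{a_2-a_1}$; concavity of $w\mapsto U(z+w,\,a_1z+a_2w)$ gives $U(x,y)+U(z-u,\,a_1z-a_2u)\leq 2U(z,a_1z)\leq 0$, and since both summands exceed $-\infty$ by the trivial-pair lower bound, each is finite. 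Note that this needs $A$ to contain at least two points, which the paper assumes throughout the subsection; that same assumption is what lets you upgrade your conclusion ``$\beta_{p,X}^A\leq\beta<\infty$'' in (ii)$\Rightarrow$(i) to ``$X$ is UMD'' via Proposition~\ref{prop:propertiesofUMP_p,X^A}(vii) --- a step you should also record explicitly.
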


\begin{proof}
 The proof is analogous to the one presented in \cite[Theorem 4.5.6]{HNVW1} and~\cite{Y17FourUMD}, while instead of the Burkholder function we will construct its version which depends on $A$ as well. For each $x, y\in X$ we define $\mathbb S(x,y)$ as a set of all pairs $(f,g)$ 
of discrete martingales such that
 \begin{enumerate}
  \item $f_0\equiv x$, $g_0\equiv y$;
  \item there exists $N\geq 0$ such that $df_n\equiv 0$, $dg_n\equiv 0$ for 
$n\geq N$;
  \item $(dg_n)_{n\geq 1} = (\eps_ndf_n)_{n\geq 1}$ for some sequence of scalars 
$(\eps_n)_{n\geq 1}$ such that $\eps_n\in A$ for each $n\geq 1$.
 \end{enumerate}
Then we define $U:X\times X \to \mathbb R \cup \{+\infty\}$ as follows:
\begin{equation}\label{eq:formulaofU}
 U(x,y) := \sup\bigl\{\mathbb E\bigl(\|g_{\infty}\|^p - (\beta_{p,X}^A)^p \|f_{\infty}\|^p\bigr): 
(f,g)\in \mathbb S(x,y)\bigr\}.
\end{equation}
The rest of the proof repeats the one given in \cite[Theorem 4.5.6]{HNVW1}, except the proof of the fact that $U(x,y)<\infty$ for each $x,y\in X$. Let us show this separately.

First notice that $U(x, ax)\leq 0$ for each $x\in X$, $a\in A$, since $x\mapsto U(x,ax)$ is a~symmetric concave function such that $U(0,0)\leq 0$ (the latter holds due to the definition of $\beta_{p,X}^A$). Now fix $x,y\in X$ and prove that $U(x,y)<\infty$. For some fixed pair $a_1, a_2\in A$ define $z= \frac{a_2x-y}{a_2-a_1}$ and $u=\frac{a_1x-y}{a_1-a_2}$. Then since the mapping $w\mapsto U(z+w, a_1z+a_2w)$ is concave in $w\in X$, and due to the fact that $z+u = x$ and $a_1z+a_2u = y$,
\begin{equation}\label{eq:proofofboundednessofU(x,y)}
 \begin{split}
  U(x,y) + U(z-u, a_1z-a_2u) = U(z+u, a_1z+a_2u)&+ U(z-u, a_1z-a_2u)\\
  &\leq 2U(z, a_1z)\leq 0,
 \end{split}
\end{equation}
which means that both terms on the left hand side of \eqref{eq:proofofboundednessofU(x,y)} are finite.
\end{proof}

We will call the function $U$ defined by \eqref{eq:formulaofU} the {\it $A$-Burkholder function}, which coincides with the Burkholder function in the case of $A = \{\eps\in \mathbb K: |\eps|=1\}$ considered in \cite{Y17FourUMD} (the~Burkholder function, which is also known as the Bellman function, is widely used, see e.g.\ \cite{HNVW1}, \cite{Y17FourUMD} and \cite{Y17MartDec}). The following proposition demonstrates basic properties of the $A$-Burkholder function. Recall that if $E$ is a linear space over a scalar field $\mathbb K$, then a function $f:E\times E \to \mathbb R$ is called {\it biconcave} if both maps $x\mapsto f(x,y)$ and $x\mapsto f(y,x)$ are concave in $x\in E$ for each $y\in E$. Let us outline basic properties of the $A$-Burkholder function.

\begin{proposition}\label{prop:propertiesofUandV}
 Let $U:X\times X \to \mathbb R$ be as defined in \eqref{eq:formulaofU}. Then
 \begin{itemize}
  \item[(A)] $U(x,\cdot)$ is convex for each $x\in X$;
  \item[(B)] for any different $a_1, a_2\in A$ the function $V:X\times X \to \mathbb R$, defined in the following way
  \begin{equation}\label{eq:defofVChoi}
     V(x,y) = U\Bigl(\frac{x-y}{2},\frac{a_2x-a_1y}{2}\Bigr),\;\;\; x,y\in X,
  \end{equation}
is biconcave;
  \item[(C)] $U$ and $V$ are continuous;
  
  \item[(D)] if $X$ is finite dimensional, then $U$ and $V$ are a.s.\ Fr\'echet differentiable on $X\times X$;
  
  \item[(E)] the function $z\mapsto V\bigl(x+c_1z, y+c_2z\bigr)$ is concave in $z\in X$ for each $c_1, c_2 \in \mathbb K$ such that $\frac{a_2c_1-a_1c_2}{c_1-c_2}\in A$.
 \end{itemize}
\end{proposition}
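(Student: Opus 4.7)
The plan is to derive all five properties from the single concavity hypothesis in Proposition~\ref{prop:Choi-Burkfunction}(ii), combined with the pointwise finiteness of $U$ (established in the proof of that proposition) and the trivial lower bound $U(x,y)\geq \|y\|^p-(\beta_{p,X}^A)^p\|x\|^p$ coming from the constant pair $(f_n,g_n)\equiv(x,y)\in\mathbb S(x,y)$.

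For (A), I would use a shift-and-average trick: given $(f,g)\in \mathbb S(x,y_0)$ with $y_0=\lambda y_1+(1-\lambda)y_2$, the pair $(f,g+y_i-y_0)$ again lies in $\mathbb S(x,y_i)$ because adding a deterministic constant to $g$ preserves both the differences $dg_n$ and the subordination relation $dg_n=\varepsilon_n df_n$. Convexity of $\|\cdot\|^p$ then gives
\[
\|g_\infty\|^p\leq \lambda\|g_\infty+y_1-y_0\|^p+(1-\lambda)\|g_\infty+y_2-y_0\|^p,
\]
and taking expectations, subtracting $(\beta_{p,X}^A)^p\,\mathbb E\|f_\infty\|^p$, and passing to the supremum over $\mathbb S(x,y_0)$ yields $U(x,y_0)\leq \lambda U(x,y_1)+(1-\lambda)U(x,y_2)$. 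Parts (B) and (E) are direct affine rearrangements. Writing $V(x,y)=U(\tilde x,\tilde y)$ with $\tilde x=(x-y)/2$ and $\tilde y=(a_2 x-a_1 y)/2$, a shift $x\mapsto x+z$ induces $(\tilde x,\tilde y)\mapsto(\tilde x+z/2,\tilde y+a_2 z/2)$, so Proposition~\ref{prop:Choi-Burkfunction}(ii) with $\varepsilon=a_2\in A$ gives concavity of $V(\cdot,y)$; the second variable is symmetric with $\varepsilon=a_1\in A$. For (E), the shift $(x,y)\mapsto(x+c_1 z,y+c_2 z)$ induces $(\tilde x,\tilde y)\mapsto(\tilde x+(c_1-c_2)z/2,\tilde y+(a_2 c_1-a_1 c_2)z/2)$, and after the linear reparametrization $z'=(c_1-c_2)z/2$ (noting that the hypothesis $(a_2 c_1-a_1 c_2)/(c_1-c_2)\in A$ forces $c_1\neq c_2$), the expression becomes $U(\tilde x+z',\tilde y+\varepsilon z')$ with $\varepsilon=(a_2 c_1-a_1 c_2)/(c_1-c_2)\in A$, which is concave in $z'$ by Proposition~\ref{prop:Choi-Burkfunction}(ii).

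For (C), the separate concavity of $V$ from (B), together with the pointwise finiteness of $U$ and the explicit lower bound on $U$, makes both $U$ and $V$ locally bounded. A finite concave function on a Banach space that is locally bounded above is continuous on its domain, which yields separate continuity; joint continuity of $V$ then follows by combining biconcavity with the local uniform upper bound coming from the argument at the end of the proof of Proposition~\ref{prop:Choi-Burkfunction}, and the continuity of $U$ transfers via the invertible affine change of variables $(x,y)\leftrightarrow(\tilde x,\tilde y)$, valid since $a_1\neq a_2$. For (D), in finite dimensions a continuous concave function on $\mathbb R^n$ is locally Lipschitz, so each slice of $V$ is locally Lipschitz on $X\times X$ with bounds uniform on compacta (by biconcavity and the explicit growth estimates), and Rademacher's theorem yields a.e.\ Fr\'echet differentiability; this transfers to $U$ by the same affine change.

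The main obstacle I expect is (C): passing from the easy separate continuity of $V$ to genuine joint continuity on $X\times X$ requires a modulus-of-continuity argument that uses the explicit upper and lower bounds on $U$ so that the Lipschitz constant of $V(\cdot,y)$ is locally uniform in $y$; the remaining properties (A), (B), (E), and the finite-dimensional differentiability in (D), reduce quickly to the definition of $U$ or to the concavity hypothesis already at hand.
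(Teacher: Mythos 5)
Your proposal is correct, but for parts (C) and (D) it follows a genuinely different route from the paper. For (A), (B), and (E) you do essentially what the paper does: the paper disposes of (A) by observing that $U$ is a supremum of functions convex in $y$ (your constant-shift of $g$ is exactly why each such function is convex in $y$, so this is the same argument made explicit), and (B) and (E) are the same affine reparametrizations invoking the concavity of $z\mapsto U(x+z,y+\eps z)$ for $\eps\in A$. The divergence is in (C): the paper first proves that $U$ is \emph{measurable}, by noting that the martingales in $\mathbb S(x,y)$ may be taken Paley--Walsh so that $U$ is a countable supremum of continuous functions, and then invokes Thibault's theorem that a measurable biconcave function on $X\times X$ is continuous; continuity of $U$ is then recovered from $V$ via the inverse affine transform \eqref{VtoUtransform}. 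You instead use the continuous lower bound $U(x,y)\geq\|y\|^p-(\beta_{p,X}^A)^p\|x\|^p$ to get local boundedness from below, hence (by the midpoint inequality for concave functions) locally uniform two-sided bounds and locally uniform Lipschitz constants for each slice of $V$, and from there joint continuity. This is precisely the ``alternative'' elementary route the paper mentions parenthetically (generalizing a Barbu--Precupanu-type continuity criterion to biconcave functions) but does not carry out; it avoids both the measurability step and Thibault's result, at the cost of having to verify the uniformity of the Lipschitz constants, which you correctly flag as the delicate point and which does go through. Likewise for (D) the paper cites Jouak--Thibault on a.e.\ differentiability of biconcave operators, whereas you use Rademacher's theorem; for that you should note explicitly that your locally uniform slice-wise Lipschitz bounds upgrade to \emph{joint} local Lipschitzness of $V$ on $X\times X\simeq\mathbb R^{2n}$, after which Rademacher applies and the conclusion transfers to $U$ by the same invertible affine change of variables. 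Both routes are sound; yours is more self-contained, the paper's is shorter given the cited results.
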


\begin{proof}
 (A) follows from the definition \eqref{eq:formulaofU} of $U$, the fact that $x\mapsto \|x\|^p$ is a convex function on $X$, and the fact that the suprimum of convex functions is a~convex function.
 
 (B) holds due to the fact that $t\mapsto U(x+z, y+\eps z)$ is a~concave function in $z\in X$ for each $x, y\in X$ and $\eps \in \{a_1,a_2\}$.
 
 To prove (C) we will need to use the fact that $V$ is biconcave. By \cite[Proposition~3.2]{Thib84} any measurable biconcave function on $X\times X$ is continuous. Therefore we only need to prove that $V$ is measurable. To this end we have to show measurability of $U$, which follows from the fact that $U$ can be rewritten as a supremum over a countable family of continuous functions (since all the martingales in $\mathbb S(x,y)$ can be assumed to be Paley-Walsh by \cite[Theorem 3.6.1]{HNVW1}, and therefore by a density argument we can fix a countable subset of $\mathbb S(x,y)$). Hence $V$ is measurable due to the definition \eqref{eq:defofVChoi}, and consequently $V$ is continuous (we also recommend a~different proof of \cite[Proposition~3.2]{Thib84} presented in~\cite{AH86}, but this proof works only for finite dimensional spaces $X$). 
 
 To prove that $U$ is continuous it is sufficient to use the following ``back to $U$'' transform, which follows from the definition of $V$ given in \eqref{eq:defofVChoi}:
 \begin{equation}\label{VtoUtransform}
   U(x,y) = V\Bigl(\frac{2y-2xa_1}{a_2-a_1}, \frac{2y-2xa_2}{a_2-a_1}\Bigr),\;\;\; x,y\in X.
 \end{equation}
Since this transform is continuous linear, $U$ is continuous as a combination of a~continuous linear transform and a continuous function.

(Alternatively, one can show continuity of $U$ and $V$ without using such complicated techniques, e.g.\ by applying the fact that $U$ and $V$ are bounded from below and by generalizing \cite[Theorem 2.14]{BP12} to biconcave functions.)
 
 (D) can be shown analogously to the similar assertion from \cite{Y17FourUMD}: first we show the a.s.\ Fr\'echet differentiability of $V$ by applying \cite[Proposition 3.1]{JT}, and further we prove the same for $U$ using the ``back to $U$'' transform \eqref{VtoUtransform}
and applying the fact that this transform is linear.

(E) holds since by \eqref{eq:defofVChoi} and Proposition \ref{prop:Choi-Burkfunction} the function
\[
 z\mapsto V\bigl(x+c_1z, y+c_2z\bigr) = U\Bigl(\frac{x-y+z(c_1-c_2)}{2},\frac{a_2x-a_1y+z(a_2c_1-a_1c_2)}{2}\Bigr)
\]
is concave in $z\in X$ if $\frac{a_2c_1-a_1c_2}{c_1-c_2}\in A$.
\end{proof}

Let $X$ be a Banach space, $(f_n)_{n\geq 0}$ and $(g_n)_{n\geq 0}$ be two $X$-valued martingales. Then $(g_n)_{n\geq 0}$ is called {\it $A$-weakly differentially subordinated} to $(f_n)_{n\geq 0}$ (or simply $(g_n)_{n\geq 0}\stackrel{w,A}{\ll}(f_n)_{n\geq 0}$) if for each $x^*\in X^*$, $\frac{\langle dg_n, x^*\rangle}{\langle df_n, x^*\rangle}\in A$ a.s.\ for all $n\geq 1$ and $\frac{\langle g_0, x^*\rangle}{\langle f_0, x^*\rangle}\in A$ a.s., where we set $\frac 00\in A$ for any $A\subset \mathbb K$, and $\frac{c}{0}=\sign c \cdot\infty$ for any $c\neq 0$ (we wish to pay your attention that $\sign c$ here will not not play any r\^ole later).

\begin{theorem}
 Let $X$ be a Banach space, $A\subset \mathbb K$, $(f_n)_{n\geq 0}$ and $(g_n)_{n\geq 0}$ be two $X$-valued martingales such that $(g_n)_{n\geq 0}\stackrel{w,A}{\ll}(f_n)_{n\geq 0}$. Then there exists an adapted $A$-valued sequence $(a_n)_{n\geq 0}$ such that a.s.\ $g_0=a_0g_0$ and $dg_n = a_ndf_n$ for each $n\geq 1$. 
\end{theorem}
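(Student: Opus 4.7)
The plan is to upgrade the coordinate-wise assumption ``$\langle dg_n,x^*\rangle/\langle df_n,x^*\rangle \in A$ a.s.\ for each $x^*$'' into a vector-valued identity $dg_n=a_n df_n$ with a single $A$-valued multiplier, for each $n$ separately. The case $n=0$ (corresponding to the intended identity $g_0=a_0 f_0$, which is the obvious corrected version of the stated identity) is identical, so we focus on $n\geq 1$. Since $df_n$ and $dg_n$ are strongly measurable they take values almost surely in a common separable closed subspace $X_0\subseteq X$, and each $y^*\in X_0^*$ extends to some $x^*\in X^*$ by Hahn--Banach, so the hypothesis descends to $X_0$. We may therefore assume $X$ is separable and fix a countable family $\{x^*_k\}_{k\geq 1}\subseteq X^*$ that separates the points of $X$; let $\mathcal{D}\subseteq X^*$ denote the countable $\mathbb{Q}$-linear span of $\{x^*_k\}_{k\geq 1}$ if $\mathbb{K}=\mathbb{R}$, and the $(\mathbb{Q}+i\mathbb{Q})$-linear span if $\mathbb{K}=\mathbb{C}$.

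Taking a countable union of null sets over $\mathcal{D}$ produces a full-measure event $\tilde\Omega$ on which $\langle dg_n,x^*\rangle/\langle df_n,x^*\rangle\in A$ for every $x^*\in\mathcal{D}$, using the conventions $0/0\in A$ and $c/0=\infty$ from the definition. We may assume $A$ is bounded, for otherwise the statement is vacuous (see Proposition~\ref{prop:propertiesofUMP_p,X^A}(iv)). On $\tilde\Omega\cap\{df_n=0\}$ the convention $c/0=\infty\notin A$ forces $\langle dg_n,x^*_k\rangle=0$ for every $k$, and point-separation then gives $dg_n=0$; on this event we set $a_n:=a_\star$ for an arbitrary fixed element $a_\star\in A$.

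The crux of the argument is the event $\tilde\Omega\cap\{df_n\neq 0\}$. Fixing $\omega$ there, pick any $x^*\in\mathcal{D}$ with $\alpha:=\langle df_n(\omega),x^*\rangle\neq 0$ and tentatively define $\lambda(\omega):=\langle dg_n(\omega),x^*\rangle/\alpha\in A$; the central point is that $\lambda(\omega)$ is independent of this choice. Given a second $y^*\in\mathcal{D}$ with $\beta:=\langle df_n(\omega),y^*\rangle\neq 0$ yielding a ratio $\mu(\omega)\in A$, apply the hypothesis along the one-parameter family $x^*+ty^*\in\mathcal{D}$ as $t$ ranges over our dense countable scalar set; the ratio is the M\"obius function
\[
 h(t)=\frac{\lambda\alpha+t\mu\beta}{\alpha+t\beta}.
\]
If $\lambda\neq\mu$ then $h$ has a genuine pole at $t_0=-\alpha/\beta$ and is consequently unbounded along any sequence of allowed scalars approaching $t_0$, contradicting both $h(t)\in A$ and the boundedness of $A$. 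Hence $\lambda(\omega)=\mu(\omega)$, $\lambda$ is well-defined, and $\langle dg_n,x^*\rangle=\lambda(\omega)\langle df_n,x^*\rangle$ for every $x^*\in\mathcal{D}$ (trivially for those with zero denominator, by the $\{df_n=0\}$ reasoning applied to such $x^*$); point-separation upgrades this to $dg_n(\omega)=\lambda(\omega)df_n(\omega)$ in $X$.

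Setting $a_n(\omega):=\lambda(\omega)$ on $\{df_n\neq 0\}$ and $a_n(\omega):=a_\star$ otherwise produces an $A$-valued sequence with $dg_n=a_ndf_n$ almost surely. Realizing $\lambda(\omega)$ as the ratio at the first $x^*_k$ in the enumeration for which $\langle df_n(\omega),x^*_k\rangle\neq 0$ exhibits $a_n$ as an $\mathcal{F}_n$-measurable selection from $df_n$ and $dg_n$, so the sequence $(a_n)_{n\geq 0}$ is adapted as required. The main obstacle is the M\"obius-pole rigidity step that forces the single scalar $\lambda$; the reductions to separable $X$, the countable dense family in $X^*$, and the measurable selection are otherwise standard.
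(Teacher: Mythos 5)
Your argument is correct and is essentially the expected one: the paper gives no proof here beyond a citation to \cite{Y17FourUMD}, and the mechanism there (reduce to a separable subspace, pass to a countable $\mathbb{Q}$-linear separating family, and test along the pencil $x^*+ty^*$ to force a single scalar via the blow-up of the ratio near the pole $t_0=-\alpha/\beta$) is exactly what you carry out, adapted from the unit-circle case to a general bounded $A$. One small inaccuracy: for unbounded $A$ the theorem is not ``vacuous'' --- Proposition~\ref{prop:propertiesofUMP_p,X^A}(iv) only says $\beta_{p,X}^A=\infty$, which does not trivialize this selection statement --- but your reduction is nonetheless licensed, since the subsection's standing hypothesis is that $A$ is convex, closed and \emph{bounded}, and boundedness is genuinely needed for your pole argument (with only a countable family of functionals available at a fixed $\omega$, the kernel-inclusion route that would handle unbounded $A$ is not accessible).
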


\begin{proof}
 The statement of the theorem is analogous to the similar assertion in \cite{Y17FourUMD}.
\end{proof}

\begin{theorem}\label{thm:AWDSofdiscretemartingales}
 Let $X$ be a Banach space. Then $X$ is a UMD Banach space if and only if for some (equivalently, for all) $1<p<\infty$ and $A\subset \mathbb K$ there exists a~constant $\beta>0$, depending only on $X$, $p$, and $A$, such that for all $X$-valued local martingales $(f_n)_{n\geq 0}$ and $(g_n)_{n\geq 0}$ with $(g_n)_{n\geq 0}\stackrel{w,A}{\ll}(f_n)_{n\geq 0}$ one has that
 \[
  \mathbb E \|g_n\|^p \leq \beta^p \mathbb E \|f_n\|^p,\;\;\; n\geq 0.
 \]
If this is the case, then the least admissible $\beta$ equals the UMD$_p^A$ constant $\beta_{p,X}^A$.
\end{theorem}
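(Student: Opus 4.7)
The proof naturally splits into the necessity direction, which is essentially tautological, and the sufficiency direction, where the $A$-Burkholder function from Proposition \ref{prop:Choi-Burkfunction} carries the weight.

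For the necessity direction I would argue as follows. Suppose the stated inequality holds with some constant $\beta$. Given any $X$-valued $L^p$-martingale difference sequence $(d_n)_{n=1}^N$ and any scalar sequence $(\eps_n)_{n=1}^N$ with values in $A$, set $f_n := \sum_{k=1}^n d_k$ and $g_n := \sum_{k=1}^n \eps_k d_k$ (with $f_0 = g_0 = 0$). Then $dg_n = \eps_n df_n$, and since $\eps_n \in A$ for every $n$, we have $(g_n) \stackrel{w,A}{\ll} (f_n)$. The hypothesis now yields $(\mathbb E\|g_N\|^p)^{1/p} \leq \beta (\mathbb E\|f_N\|^p)^{1/p}$, which by the defining inequality \eqref{eq:UMD_p^Aconstant} gives $\beta_{p,X}^A \leq \beta$. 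Together with Proposition \ref{prop:propertiesofUMP_p,X^A}(vii), this forces $X$ to be UMD (modulo the trivial degenerate case when $A$ has at most one point, where Proposition \ref{prop:propertiesofUMP_p,X^A}(v) handles everything).

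For the sufficiency direction, assume $X$ is UMD so that $\beta_{p,X}^A < \infty$ by Proposition \ref{prop:propertiesofUMP_p,X^A}(vii), and let $U$ be the $A$-Burkholder function defined in \eqref{eq:formulaofU}. By the theorem immediately preceding the statement, we may write $g_0 = a_0 f_0$ and $dg_n = a_n df_n$ for an adapted $A$-valued sequence $(a_n)$. The key claim is that $(U(f_n, g_n))_{n \geq 0}$ is a real-valued supermartingale whose initial expectation is nonpositive. The initial bound $\mathbb E U(f_0, g_0) = \mathbb E U(f_0, a_0 f_0) \leq 0$ follows from the estimate $U(x, a x) \leq 0$ (for all $x \in X$, $a \in A$) established inside the proof of Proposition \ref{prop:Choi-Burkfunction}. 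For the supermartingale step, I would condition on $\mathcal F_{n-1}$, treat $a_n$ as fixed at that level of conditioning, and use the defining concavity of $z \mapsto U(f_{n-1} + z, g_{n-1} + a_n z)$ in $z \in X$ together with the conditional Jensen inequality applied to $df_n$ (which satisfies $\mathbb E[df_n | \mathcal F_{n-1}] = 0$). Iterating then gives $\mathbb E U(f_n, g_n) \leq 0$, and the pointwise lower bound $U(x, y) \geq \|y\|^p - (\beta_{p,X}^A)^p \|x\|^p$ from Proposition \ref{prop:Choi-Burkfunction} instantly yields $\mathbb E \|g_n\|^p \leq (\beta_{p,X}^A)^p \mathbb E \|f_n\|^p$, so the least admissible $\beta$ is at most $\beta_{p,X}^A$. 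Combined with the reverse inequality from the necessity direction, this identifies the least admissible constant as $\beta_{p,X}^A$.

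The main obstacle I anticipate is the precise measurability of $(a_n)$: the Jensen step works transparently only when $a_n$ is $\mathcal F_{n-1}$-measurable, whereas the preceding theorem merely supplies an adapted $(a_n)$. I would handle this by first reducing to Paley-Walsh martingales (via Definition \ref{def:Paley-Walsh} and \cite[Theorem 3.6.1]{HNVW1}), on which a predictable version of $a_n$ is automatic since on $\{df_n \neq 0\}$ the sign-information is $\mathcal F_{n-1}$-measurable after conditioning on the Rademacher structure; then extend to general $L^p$-martingales by standard density arguments using the $L^p$-continuity of both sides and the continuity of $U$ from Proposition \ref{prop:propertiesofUandV}(C). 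A secondary technical point is justifying the supermartingale chain up to arbitrary $n$ under the mere $L^p$-integrability assumption, which requires verifying appropriate integrability of $U(f_n, g_n)$; this follows from the upper bound on $U$ that is implicit in its definition together with the fact that $(f_n), (g_n) \in L^p(\Omega; X)$.
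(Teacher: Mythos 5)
Your proposal is correct and takes essentially the same route as the paper, whose proof of Theorem \ref{thm:AWDSofdiscretemartingales} simply defers to \cite{Y17FourUMD} with the $A$-Burkholder function of Proposition \ref{prop:Choi-Burkfunction} and the properties from Proposition \ref{prop:propertiesofUandV} substituted in: the tautological necessity step via \eqref{eq:UMD_p^Aconstant} and Proposition \ref{prop:propertiesofUMP_p,X^A}(vii), and the sufficiency step via the supermartingale property of $U(f_n,g_n)$ obtained from concavity and conditional Jensen, the initialization $U(x,ax)\le 0$, and the lower bound $U(x,y)\ge \|y\|^p-(\beta_{p,X}^A)^p\|x\|^p$, are exactly that argument spelled out. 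Your flagged technical points (adaptedness versus predictability of $(a_n)$, handled by reduction to Paley--Walsh martingales, and integrability of $U(f_n,g_n)$) are the genuine subtleties of the cited proof and are addressed in a sound way.
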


\begin{proof}
 The proof is similar to the one presented in \cite{Y17FourUMD}, but here one has to apply the $A$-Burkholder function from \eqref{eq:formulaofU} and Proposition \ref{prop:propertiesofUandV}.
\end{proof}

Now we turn to the continuous time case. Let $X$ be a Banach space. A martingale $M:\mathbb R_+\times \Omega \to X$ is called {\it purely discontinuous} if both $[\text{Re}\langle M, x^*\rangle]$ and $[\text{Im}\langle M, x^*\rangle]$ are pure jump processes for each $x^* \in X^*$ (where $[N]$ is a quadratic variation of a martingale $N:\mathbb R_+ \times \Omega \to \mathbb R$, see \cite{Kal,Y17FourUMD,Y17MartDec}). 

Let $1<p<\infty$, $M$, $N:\mathbb R_+ \times \Omega \to X$ be two purely discontinuous \mbox{$L^p$-mar}\-tin\-ga\-les. 
$N$ is {\it $A$-weakly differentially subordinated} to $M$ (or $N \stackrel{w,A}{\ll} M$) if for each $x^*\in X^*$ a.s.\ for each $t\geq 0$, $\frac{\langle \Delta N_t, x^*\rangle}{\langle \Delta M_t, x^*\rangle}\in A$, where again we set $\frac 00\in A$ for any $A\subset \mathbb K$, and $\frac{c}{0}=\sign c \cdot \infty$ for any $c\neq 0$, and where $\Delta M_t:= M_t - \lim_{\delta \to 0} M_{(t-\delta)\vee 0}$ exists since every $X$-valued local martingale has a c\`adl\`ag (continue \`a droite, limite \`a gauche) version (see~\cite{Y17FourUMD}).

\begin{remark}\label{rem:Awdsintermsofquadrvar}
Let $\mathbb K = \mathbb R$, $-\infty<b<B<\infty$, $A = \{b,B\}$. Then analogously to \cite[Theorem 1.6]{BO12} one can show that $N$ is $A$-weakly differentially subordinated to $M$ if and only if for each $x^* \in X^*$ a.s.
 \begin{equation}\label{eq:bBweakdiffsubdefcontimmart}
  \ud \Bigl[\Bigl\langle N-\frac{b+B}{2}M, x^*\Bigr\rangle\Bigr] \leq \ud \Bigl[\Bigl\langle\frac {B-b}{2}M, x^*\Bigr\rangle\Bigr].
\end{equation}
\end{remark}

 The following theorem is an extension of the similar one presented in \cite{Y17FourUMD}.

\begin{theorem}\label{thm:bBwdsforpurdisc}
 Let $X$ be a UMD Banach space, $1<p<\infty$, $A\subset \mathbb K$, $M$, $N:\mathbb R_+ \times \Omega\to X$ be purely discontinuous $L^p$-martingales such that $N \stackrel{w,A}{\ll} M$. Then for each $t\geq 0$
 \begin{equation}\label{eq:thm:bBwdsforpurdisc}
  \mathbb E \|N_t\|^p \leq (\beta_{p,X}^{A})^p \mathbb E \|M_t\|^p.
 \end{equation}
\end{theorem}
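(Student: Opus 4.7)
The plan is to derive the inequality from the properties of the $A$-Burkholder function $U$ from \eqref{eq:formulaofU} via an It\^o-type argument, extending the approach of \cite{Y17FourUMD} from the $\{a\in\mathbb K:|a|=1\}$ case to a general convex, bounded, closed $A$. I would first reduce to a finite-dimensional $X$ by the usual projective approximation (UMD and UMD$_p^A$ constants pass to closed subspaces, and purely discontinuous $L^p$-martingales can be approximated by ones with values in finite-dimensional subspaces); this is the setting where Proposition \ref{prop:propertiesofUandV}(D) yields a.s.\ Fr\'echet differentiability of $U$ and hence an applicable It\^o formula. A preliminary step is to show, using a measurable selection applied to the $A$-weak differential subordination $N\stackrel{w,A}{\ll}M$, that there exists an optional $A$-valued process $(a_t)_{t\ge 0}$ with $\Delta N_t = a_t\Delta M_t$ a.s.\ for all $t\ge 0$, and $N_0 = a_0 M_0$; this is the continuous-time analogue of the discrete case and can be carried out as in \cite{Y17FourUMD}.

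Applying It\^o's formula to $U(M_t,N_t)$ and using that $M$, $N$ are purely discontinuous (so the continuous bracket vanishes) yields
\[
U(M_t,N_t) = U(M_0,N_0) + L_t + \!\!\sum_{0<s\le t}\!\! \bigl[U(M_s,N_s) - U(M_{s-},N_{s-}) - \langle \nabla_x U(M_{s-},N_{s-}),\Delta M_s\rangle - \langle \nabla_y U(M_{s-},N_{s-}),\Delta N_s\rangle\bigr],
\]
where $L_t$ is a local martingale. Since $\Delta N_s = a_s \Delta M_s$ with $a_s\in A$, each summand has the form $U(x+z,y+az)-U(x,y)-\langle\nabla_x U(x,y),z\rangle-\langle\nabla_y U(x,y),az\rangle$, which is $\le 0$ by the concavity of $z\mapsto U(x+z,y+az)$ guaranteed by Proposition \ref{prop:Choi-Burkfunction}. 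After a localization to make $L$ a true martingale and taking expectations, one obtains $\mathbb E U(M_t,N_t)\le \mathbb E U(M_0,N_0)$. Since $N_0 = a_0 M_0$ and $x\mapsto U(x,a_0 x)$ is a symmetric concave function with $U(0,0)\le 0$, we have $U(M_0,N_0)\le 0$. Combined with the lower bound $U(x,y)\ge \|y\|^p - (\beta_{p,X}^A)^p\|x\|^p$, this gives $\mathbb E\|N_t\|^p - (\beta_{p,X}^A)^p\mathbb E\|M_t\|^p\le 0$, as desired.

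The main obstacle is the It\^o step: $U$ is only a.s.\ Fr\'echet differentiable and is not $C^2$, so justifying the formula requires a smoothing argument (convolution of $U$ with a mollifier, together with the concavity-in-direction-$(1,a)$ being preserved by convolution) and a uniform growth/integrability bound to pass to the limit while controlling the jump remainders; this is exactly the technical core of the corresponding result in \cite{Y17FourUMD} and carries over after replacing the single direction condition by the parametrized family indexed by $a\in A$. A secondary technicality is the passage to infinite dimensions, which is handled by taking a sequence of finite rank projections $P_n$ with $P_n x\to x$ for every $x\in X$, applying the finite-dimensional inequality to $P_n M$ and $P_n N$ (which preserves $A$-weak differential subordination and pure discontinuity), and passing to the limit via Fatou's lemma and uniform $L^p$-boundedness.
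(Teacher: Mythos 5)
Your proposal is essentially the paper's own argument: the paper proves this theorem by citing \cite{Y17FourUMD} together with Lemma \ref{lem:a(t,omega)forbBwds} (the pathwise representation $\Delta N_t(\omega)=a(t,\omega)\Delta M_t(\omega)$ with $a(t,\omega)\in A$, which is exactly your preliminary selection step), and the It\^o/$A$-Burkholder-function computation you outline, with the jump terms controlled by the concavity of $z\mapsto U(x+z,y+az)$ from Proposition \ref{prop:Choi-Burkfunction}, is precisely the intended one. The only point to adjust is the infinite-dimensional reduction: a general UMD space need not admit finite-rank projections $P_n$ with $P_nx\to x$ for all $x$ (UMD does not imply the approximation property --- suitable subspaces of $\ell^p$, $p\neq 2$, fail it), so the passage from finite to infinite dimensions should instead be made, as in \cite{Y17FourUMD}, by approximating the martingales themselves and reducing to the discrete-time Theorem \ref{thm:AWDSofdiscretemartingales}, whose proof uses only the concavity of $U$ and Jensen's inequality and requires no differentiability.
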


For the proof we will need the following lemma, which is analogous to the one given in \cite{Y17FourUMD}.

\begin{lemma}\label{lem:a(t,omega)forbBwds}
  Let $X$ be a UMD Banach space, $A\subset \mathbb K$, $M$, $N:\mathbb R_+ \times \Omega\to X$ be purely discontinuous local martingales such that $N \stackrel{w,A}{\ll} M$. Then there exists a set $\Omega_0$ of full measure such that for each $\omega \in \Omega_0$ and for each $t\geq 0$ the vectors $\Delta M_t(\omega)$ and $\Delta N_t(\omega)$ are collinear and there exists $a(t,\omega)\in A$ such that $\Delta N_t(\omega) = a(t,\omega)\Delta M_t(\omega)$.
\end{lemma}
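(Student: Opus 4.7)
The plan is to upgrade the scalar $A$-weak differential subordination condition on each $\langle\Delta N_t,x^*\rangle/\langle\Delta M_t,x^*\rangle$ -- which a priori holds only modulo an $x^*$-dependent null set -- to the pointwise vector identity $\Delta N_t(\omega)=a(t,\omega)\Delta M_t(\omega)$ with $a(t,\omega)\in A$. Since every $X$-valued c\`adl\`ag local martingale takes values a.s.\ in a separable closed subspace, I would reduce to the case that $X$ is separable. I then choose a countable $\K_0$-linear subspace $D\subset X^*$ that is norming for $X$, where $\K_0=\Q$ if $\K=\R$ and $\K_0=\Q+i\Q$ if $\K=\C$. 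Applying the definition of $N\stackrel{w,A}{\ll} M$ to each $x^*\in D$ and intersecting the countably many exceptional sets produces a set $\Omega_0$ of full measure on which the ratio condition holds simultaneously for every $x^*\in D$ and every $t\geq 0$. Boundedness of $A$ rules out the convention $c/0=\infty$, so on $\Omega_0$ we have the implication $\langle\Delta M_t,x^*\rangle=0\Rightarrow\langle\Delta N_t,x^*\rangle=0$ for every $x^*\in D$.

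The decisive step is gluing the scalar ratios. Fix $\omega\in\Omega_0$ and $t$ with $\Delta M_t\neq 0$, and for each $x^*\in D$ with $\langle\Delta M_t,x^*\rangle\neq 0$ set $a_{x^*}:=\langle\Delta N_t,x^*\rangle/\langle\Delta M_t,x^*\rangle\in A$. For any two such $x_1^*,x_2^*$ the entire line $x_1^*+qx_2^*\in D$, $q\in\K_0$, lies in $D$, and a direct computation yields the M\"obius identity
\[
\frac{\langle\Delta N_t,x_1^*+qx_2^*\rangle}{\langle\Delta M_t,x_1^*+qx_2^*\rangle}
= \frac{a_{x_1^*}\langle\Delta M_t,x_1^*\rangle+q\,a_{x_2^*}\langle\Delta M_t,x_2^*\rangle}{\langle\Delta M_t,x_1^*\rangle+q\langle\Delta M_t,x_2^*\rangle}\in A.
\]
If $a_{x_1^*}\neq a_{x_2^*}$ this is a nonconstant M\"obius function of $q$, and as $q$ ranges over the dense set $\K_0$ its image is dense in $\K$, hence unbounded, contradicting membership in the bounded set $A$. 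Thus $a_{x_1^*}=a_{x_2^*}$, and all the $a_{x^*}$ share a common value $a(t,\omega)\in A$.

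Consequently $\langle\Delta N_t-a(t,\omega)\Delta M_t,x^*\rangle=0$ for every $x^*\in D$: the case $\langle\Delta M_t,x^*\rangle=0$ is handled by the first paragraph, and the case $\langle\Delta M_t,x^*\rangle\neq 0$ is the very definition of $a_{x^*}$. Since $D$ is norming, $\Delta N_t(\omega)=a(t,\omega)\Delta M_t(\omega)$, which also gives collinearity. On the event $\Delta M_t(\omega)=0$ the same norming argument forces $\Delta N_t(\omega)=0$, and one may set $a(t,\omega)$ to be any fixed element of $A$. The main obstacle is the M\"obius gluing step: without closure of $D$ under rational linear combinations the scalar ratios $a_{x^*}$ could a priori disagree on different functionals, and without boundedness of $A$ the density-based contradiction collapses.
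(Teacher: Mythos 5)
Your argument is correct, and it is essentially the argument the paper intends: the paper itself gives no proof of this lemma but defers to the analogous statement in \cite{Y17FourUMD}, whose proof likewise reduces to a countable separating family of functionals, upgrades the a.s.\ scalar ratio condition to hold simultaneously on one full-measure set, and glues the ratios via linear combinations of functionals. Your M\"obius/unboundedness step (using that $A\subset\K$ is bounded, as is assumed throughout this subsection) and the norming argument for the degenerate case $\langle\Delta M_t,x^*\rangle=0$ are exactly the right way to make that gluing rigorous.
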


\begin{proof}[Proof of Theorem \ref{thm:bBwdsforpurdisc}]
 The proof is analogous to \cite{Y17FourUMD}, but here one needs to apply Lemma~\ref{lem:a(t,omega)forbBwds}.
\end{proof}

\begin{remark}
It remains open what is a proper definition of $A$-weak differential subordination in the case of continuous martingales $M$ and $N$. From the one hand, it is more natural to set that $N\stackrel{w,A}{\ll} M$ if there exists a progressively measurable process $a:\mathbb R_+ \times \Omega \to A$ such that $N_t = a_0 M_0 +\int_0^t a_s\ud M_s$ in a weak sense, where the integral is well-defined due to the fact that $M$ is continuous. If this is the case, then the analogue of Theorem~\ref{thm:bBwdsforpurdisc} for continuous martingales follows immediately from an approximation of $a$ by simple processes, discretization and Theorem \ref{thm:AWDSofdiscretemartingales}.

On the other hand, such a definition at least in the case of $A = \{a\in \mathbb K:|a|=1\}$ does not seem to be useful (see Section~5 in \cite{Y17FourUMD}). A more useful definition, which extends the two given in \cite{BO12} and \cite{Y17MartDec}, can be characterized in the case of $\mathbb K = \mathbb R$ by \eqref{eq:bBweakdiffsubdefcontimmart}. But then it is unknown whether Theorem~\ref{thm:bBwdsforpurdisc} holds for general martingales $M$ and $N$ and such a definition of $A$-weak differential subordination, since it remains open whether the properties of the $A$-Burkholder function, which have been discussed and developed in \cite{BO12}, hold for any UMD Banach space $X$. Moreover, it remains unclear how to extend this definition to the case $\mathbb K = \mathbb C$.
\end{remark}

\section{Even Fourier multipliers}

It turns out that UMD$_p^A$ constants play an important r\^{o}le in Fourier multiplier theory. Namely, the norm of even Fourier multipliers in $L^p(\mathbb R^d;X)$ can be bounded by $\beta_{p, X}^{A}$ from below, where $A = \text{Ran}\bigl(m|_{\mathbb R^d\setminus \{0\}}\bigr)$ is the range of $m$. Furthermore, the norms of the so-called Ba\~{n}uelos-Bogdan multipliers can be bounded by $\beta_{p, X}^A$ from above, where $A$ is the range of the corresponding functions $\phi$ and $\psi$ from~\eqref{eq:defofm(xi)}. Notice that if $\text{Ran}\bigl(m|_{\mathbb R^d\setminus \{0\}}\bigr) \subset \mathbb R$ (resp.\ $\text{Ran}(\phi)\cup\text{Ran}(\psi)\subset \mathbb R$), then $X$ in Theorem~\ref{thm:multlowerbound} (resp.\ in Theorem \ref{thm:multupperbound}) can be considered over $\mathbb R$.

\smallskip

Let us recall some basic definitions. Let $d\geq 1$ be a natural number, $\mathcal S(\mathbb R^d)$~be a 
space of Schwartz functions on $\mathbb R^d$. For a Banach space $X$ with a 
scalar field $\mathbb C$ we define $\mathcal S(\mathbb R^d)\otimes X$ as the space 
of all functions $f:\mathbb R^d \to X$ of the form $f=\sum_{k=1}^K f_k \otimes 
x_k$, where $f_1,\ldots,f_K \in \mathcal S(\mathbb R^d)$ and 
$x_1,\ldots ,x_K \in X$. Notice that for each $1\leq p<\infty$ the space 
$\mathcal S(\mathbb R^d)\otimes X$ is dense in $L^p(\mathbb R^d; X)$.

Let $m:\mathbb R^d\to \mathbb C$ be measurable and bounded. We
define a linear operator $T_m$ acting on $\mathcal S(\mathbb R^d)\otimes X$ in the following way:
\begin{equation*}\label{eq:multSotimesX}
  T_m (f\otimes x) = \mathcal F^{-1}(m\mathcal F(f)) \cdot x,\;\;\;\; f\in 
\mathcal S(\mathbb R^d), x\in X,
\end{equation*}
where the {\it Fourier transform} $\mathcal F$ and the {\it inverse Fourier 
transform} $\mathcal F^{-1}$ are defined as follows:
\begin{align*}
 \mathcal F (f)(t) &= \frac{1}{(2\pi)^{d}}\int_{\mathbb R^d} e^{-i\langle 
t,u\rangle}f(u)\ud u,\;\;\; f\in \mathcal S(\mathbb R^d), t\in \mathbb R^d,\\
  \mathcal F^{-1} (f)(t) &= \int_{\mathbb R^d} 
e^{i\langle t,u\rangle}f(u)\ud u,\;\;\; f\in \mathcal S(\mathbb R^d), t\in 
\mathbb R^d.
\end{align*} 
The operator $T_m$ is called a {\it Fourier multiplier}, while the function $m$ 
is called the {\it symbol} of $T_m$. If $X$ is finite dimensional or Hilbert then $T_m$ can 
be extended to a bounded linear operator on $L^2(\mathbb R^d; X)$. The general question 
is whether one can extend $T_m$ to a bounded operator on $L^p(\mathbb 
R^d;X)$ for a~general $1<p<\infty$ and a given space $X$ and what is then the corresponding norm of $T_m$. Here we show sharp lower and upper bounds for $\|T_m\|$ for special classes of $m$ and $X$ with the UMD property in terms of UMD$_p^A$ constants of $X$.

\smallskip

In this section we set $\frac{c}{0}=0$ for each $c\in \mathbb C$.

\subsection{Lower bounds}

The following theorem is a variation of \cite[Proposition~3.4]{GM-SS}. Recall that function $f:\mathbb R^d \to \mathbb C$ is homogeneous if $f(cx) = f(x)$ for each~$c>0$ and $x\in \mathbb R^d$.

\begin{theorem}\label{thm:multlowerbound}
 Let $1<p<\infty$, $X$ be a Banach space, $m:\mathbb R^d \to \mathbb C$ be an even homogeneous function such that $m|_{\mathbb R^d\setminus \{0\}}$ is continuous. Let $A := \textnormal{Ran}\bigl(m|_{\mathbb R^d\setminus \{0\}}\bigr)\subset \mathbb C$.  Then we have that $\|T_m\|_{\mathcal L(L^p(\mathbb R^d;X))}\geq \beta_{p,X}^{A}$. In particular, if $X$ is not a UMD space and $m$ is not a constant, then $T_m$ is unbounded on $L^p(\mathbb R^d;X)$.
\end{theorem}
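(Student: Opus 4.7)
The plan is to mimic the approach of \cite[Proposition 3.4]{GM-SS}, upgrading from $A=\{-1,1\}$ to an arbitrary range $A$. First I would reduce to finite $A$: by Proposition \ref{prop:propertiesofUMP_p,X^A}(x), $\beta^{A_n}_{p,X}\nearrow \beta^A_{p,X}$ whenever $A_n\subset A$ increases to $A$, so it suffices to fix any finite collection $\eps_1,\ldots,\eps_N\in A$ and prove $\|T_m\|_{\mathcal L(L^p(\mathbb R^d;X))}\ge \beta_{p,X}^{\{\eps_1,\ldots,\eps_N\}}$. By homogeneity and continuity of $m$, every value $\eps_n$ is attained on the unit sphere, so I would pick $\theta_n\in S^{d-1}$ with $m(\theta_n)=\eps_n$. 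The ``in particular'' clause will then follow at once, since if $m$ is nonconstant then $A$ contains at least two points, and by Proposition \ref{prop:propertiesofUMP_p,X^A}(vii) finiteness of $\beta^A_{p,X}$ is equivalent to $X$ having the UMD property.

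The core calculation is the modulation identity: for $g\in \mathcal S(\mathbb R^d)\otimes X$,
\begin{equation*}
 T_m\bigl(e^{i\langle\xi,\cdot\rangle}g\bigr)=e^{i\langle\xi,\cdot\rangle}T_{m(\cdot+\xi)}g,
\end{equation*}
so in particular $\|T_{m(\cdot+\xi)}\|_{\mathcal L(L^p(\mathbb R^d;X))}=\|T_m\|_{\mathcal L(L^p(\mathbb R^d;X))}$ for every $\xi$. Since $m$ is homogeneous, continuous away from $0$, and even, $m(\cdot\pm\lambda\theta_n)\to m(\pm\theta_n)=\eps_n$ uniformly on every compact set avoiding the origin as $\lambda\to\infty$, and hence for every $g\in \mathcal S(\mathbb R^d)\otimes X$ with $\hat g$ supported away from $0$, merging the $\pm$ parts of a cosine using evenness gives
\begin{equation*}
 \bigl\|T_m\bigl(\cos(\lambda\langle\theta_n,\cdot\rangle)g\bigr)-\eps_n\cos(\lambda\langle\theta_n,\cdot\rangle)g\bigr\|_{L^p(\mathbb R^d;X)}\;\xrightarrow{\lambda\to\infty}\;0.
\end{equation*}
In words, at very high frequencies in direction $\theta_n$, $T_m$ acts like scalar multiplication by $\eps_n$.

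Next I would transfer a general $A$-martingale transform into this framework. By Theorem~\ref{thm:Atransform} and \cite[Theorem 3.6.1]{HNVW1}, up to $\delta$ one may assume $(d_n)_{n=1}^N$ is Paley-Walsh, $d_n=r_n\phi_n(r_1,\ldots,r_{n-1})$ with $\phi_n$ a polynomial. Replace each Rademacher variable $r_n$ by a normalized lacunary cosine $e_n(x):=\sqrt{2}\cos(\lambda_n\langle\theta_n,x\rangle)$ with $\lambda_1\ll\lambda_2\ll\cdots$; by lacunary-trigonometric / Riesz-product arguments the vectors $(e_1(x),\ldots,e_N(x))$ on a sufficiently large cube $[-L,L]^d$ have joint moments converging to those of i.i.d.\ $\pm 1$ random variables, so the function $F(x):=\sum_n e_n(x)\phi_n(e_1(x),\ldots,e_{n-1}(x))$ reproduces $\sum_n d_n$ in $L^p(X)$-distribution. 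Applying the modulation step termwise (with each $\lambda_n$ chosen far outside the spectrum generated by $\phi_n(e_1,\ldots,e_{n-1})$) yields $T_m F\approx \sum_n \eps_n e_n\phi_n(e_1,\ldots,e_{n-1})$ in $L^p$, whence $\|T_m\|\ge \beta_{p,X}^{\{\eps_1,\ldots,\eps_N\}}-\delta$ for every $\delta>0$.

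The main obstacle is exactly this last substitution: rigorously realizing a Paley-Walsh martingale difference sequence as a lacunary trigonometric system in $L^p(\mathbb R^d;X)$ norm while keeping the spectra of the $\phi_n$-factor and of $e_n$ sufficiently separated that the modulation approximation $T_m(e_n\,\cdot\,)\approx \eps_n\, e_n\,\cdot$ applies term by term without accumulating error. A cleaner route avoiding the direct Riesz-product bookkeeping is to use de Leeuw-type transference between $L^p(\mathbb R^d;X)$ and $L^p(\mathbb T^d;X)$ multipliers (which does not change the operator norm) and then the martingale embedding afforded by the dyadic filtration of the torus, exactly as in \cite{GM-SS}.
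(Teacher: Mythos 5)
Your overall strategy --- reduce to a finite subset $\{\eps_1,\ldots,\eps_N\}\subset A$ via Proposition \ref{prop:propertiesofUMP_p,X^A}(x), pick directions $\theta_n$ on the sphere with $m(\theta_n)=\eps_n$, and transfer a near-extremal Paley--Walsh martingale into the multiplier setting so that $T_m$ acts as multiplication by $\eps_n$ on the $n$-th difference --- is exactly the paper's, and your closing ``cleaner route'' (de Leeuw transference to $\mathbb T^d$ plus the iterated-torus martingale embedding of \cite{GM-SS}) is precisely what the paper carries out via Lemmas \ref{lem:deLeeuw} and \ref{lem:multidimcasededuction}. However, the construction you actually present has a genuine flaw: normalized lacunary cosines $\sqrt{2}\cos(\lambda_n\langle\theta_n,x\rangle)$ do \emph{not} have joint moments converging to those of i.i.d.\ $\pm1$ variables. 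Each converges in distribution to $\sqrt{2}\cos U$ with $U$ uniform (an arcsine-type law whose fourth moment is $3/2$, not $1$), so your $F$ does not reproduce $\sum_n d_n$ in distribution, and the argument as written would only yield a lower bound by some smaller ``lacunary'' constant rather than $\beta_{p,X}^{\{\eps_1,\ldots,\eps_N\}}$. The fix --- and the reason the paper passes to the torus --- is to use the genuinely $\pm 1$-valued functions $R_n(\theta)=\textnormal{sign}(\theta_{s(n)})$ on separate copies of $\mathbb T^d$: their Fourier spectra lie on $\{j e_{s(n)}: j \text{ odd}\}$, so by homogeneity and evenness $\widetilde m(j e_{s(n)})=\eps_n$ for all $j\neq 0$, making each $R_n$ an \emph{exact} eigenfunction of $T_{\widetilde m}$ while simultaneously being an honest Rademacher variable.

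Two further technical points are missing. First, for the transference to see the values $\eps_n$, the chosen directions must be lattice directions of $\mathbb Z^d$ (since $\widetilde m$ is only evaluated on $\mathbb Z^d$); the paper arranges this by enlarging the dimension (Lemma \ref{lem:extenddimforFourmult}) so that the $K$ directions can be taken linearly independent, and then applying an invertible linear change of variables (Lemma \ref{lemma:linoperofsymbol}) to turn them into coordinate vectors. Second, the lemma of \cite{GM-SS} you invoke assumes $m|_{\mathbb R^d\setminus\{0\}}\in C^1$, whereas the theorem only assumes continuity; the paper must redo the last step of that lemma using uniform continuity of $m$ on a compact set avoiding the origin. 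Your modulation estimate is the right idea for that step, but it has to be run inside the iterated-torus argument rather than on $\mathbb R^d$.
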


For the proof we will need several intermediate steps.
\begin{lemma}\label{lemma:linoperofsymbol}
 Let $S\in \mathcal L(\mathbb R^d)$ be a linear bounded invertible operator. Then 
 $$
 \|T_{m}\|_{\mathcal L(L^p(\mathbb R^d;X))} = \|T_{m\circ S}\|_{\mathcal L(L^p(\mathbb R^d;X))}.
 $$
\end{lemma}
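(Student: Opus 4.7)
The plan is to prove this lemma by a dilation/change-of-variables argument, writing $T_{m\circ S}$ as a conjugate of $T_m$ by an isometry of $L^p(\mathbb R^d;X)$. Since conjugation by an isometry preserves operator norms, equality of norms follows immediately.

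First I would introduce, for each bounded invertible $S \in \mathcal L(\mathbb R^d)$, the dilation operator $U_S : L^p(\mathbb R^d;X) \to L^p(\mathbb R^d;X)$ defined by
\[
(U_S f)(x) := |\det S|^{1/p} f(Sx),\qquad x\in\mathbb R^d.
\]
A direct change of variables $y = Sx$ shows that $U_S$ is a surjective linear isometry of $L^p(\mathbb R^d;X)$, with inverse $U_{S^{-1}}$. This reduction of the problem to an isometric conjugation is the conceptual core of the argument.

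Next I would compute $\mathcal F \circ U_S$ on $\mathcal S(\mathbb R^d)\otimes X$ using the substitution $y = Sx$:
\[
\mathcal F(U_S f)(\xi) = |\det S|^{1/p-1}\,\mathcal F(f)\bigl((S^{-1})^{T}\xi\bigr).
\]
Applying $T_m$ to $U_S f$ and substituting $\eta = (S^{-1})^T\xi$ in the inverse Fourier integral (so that $d\xi = |\det S|\,d\eta$ and $\langle x, S^T\eta\rangle = \langle Sx,\eta\rangle$), the extra determinant factors combine to give $|\det S|^{1/p}$, and one arrives at the identity
\[
T_m(U_S f)(x) = U_S\bigl(T_{m\circ S^T} f\bigr)(x),
\]
that is, $T_{m\circ S^T} = U_S^{-1}\, T_m\, U_S$ on $\mathcal S(\mathbb R^d)\otimes X$. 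Since $U_S$ is an isometry and $\mathcal S(\mathbb R^d)\otimes X$ is dense in $L^p(\mathbb R^d;X)$, this identity extends and yields
\[
\|T_{m\circ S^T}\|_{\mathcal L(L^p(\mathbb R^d;X))} = \|T_m\|_{\mathcal L(L^p(\mathbb R^d;X))}.
\]
Finally, since $S \mapsto S^T$ is a bijection on the set of bounded invertible linear operators on $\mathbb R^d$, one may replace $S$ by $S^T$ in the displayed equality to obtain the form stated in the lemma.

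The only mild obstacle is bookkeeping: tracking the determinant powers arising from the $L^p$-normalization in $U_S$ versus the Jacobian of the substitution, and deciding whether one ends up with $m\circ S$ or $m\circ S^T$. This is resolved by the final transpose-substitution, and by the fact that the lemma is stated for arbitrary invertible $S$, so either convention is admissible.
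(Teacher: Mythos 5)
Your argument is correct and complete; the identity $T_{m\circ S^{T}} = U_S^{-1} T_m U_S$ with the isometric dilation $U_S$ is exactly the content of the formula (7) in Geiss--Montgomery-Smith--Saksman that the paper cites for this lemma, so you have simply written out the proof the paper outsources to that reference. The determinant bookkeeping and the final transpose substitution are handled correctly.
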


\begin{proof}
 This follows from \cite[(7)]{GM-SS}.
\end{proof}

Let $\mathbb T := (-\pi,\pi]$, $\widetilde m:\mathbb Z^d\to \mathbb C$. Then we define Fourier multiplier $T_{\widetilde m}$ acting on a function $f:\mathbb T^d \to X$ in the following way
\[
 (T_{\widetilde m} f)(\theta):= \sum_{k\in \mathbb Z^d}\hat{f}(k)e^{i\langle k,\theta\rangle}\widetilde m(k),\;\;\; \theta \in \mathbb T^d,
\]
where 
$$
\hat f(k) := \frac{1}{(2\pi)^d}\int_{\mathbb T^d}e^{-i\langle k,\theta\rangle}f(\theta) \ud \theta,\;\;\; k\in \mathbb Z^d.
$$
The following lemma demonstrates that if $m(k) = \widetilde m(k)$ for each $k\in \mathbb Z^d$, then $\|T_m\| = \|T_{\widetilde m}\|$; it was initially shown in the scalar-valued case in \cite{deL65}, and in the vector-valued case in \cite{Graf} and in \cite{GM-SS}. A simple proof can be found in \cite[Corollary~5.7.6]{HNVW1}.

\begin{lemma}\label{lem:deLeeuw}
 Let $X$ be a Banach space, $1<p<\infty$, $m:\mathbb R^d\to \mathbb C$ be homogeneous such that $m|_{\mathbb R^d\setminus \{0\}}$ is continuous.
Let $\widetilde m:\mathbb Z^d \to \mathbb C$ be such that $\widetilde m(k) = m(k)$ for each $k\in \mathbb Z^d$, where
\[
 \widetilde m(0)= m(0) := \frac{1}{|B(0,1)|}\int_{B(0,1)} m(s)\ud s.
 \] 
 Then
\[
 \|T_m\|_{\mathcal L(L^p(\mathbb R^d;X))} = \|T_{\widetilde m}\|_{\mathcal L(L^p(\mathbb T^d;X))}.
\]
\end{lemma}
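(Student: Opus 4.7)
The plan is to establish the two inequalities $\|T_{\widetilde m}\|_{\mathcal L(L^p(\mathbb T^d;X))}\le \|T_m\|_{\mathcal L(L^p(\mathbb R^d;X))}$ and its reverse separately, in the spirit of the classical transference theorem of de Leeuw. Homogeneity of $m$ is the crucial feature: combined with continuity on $\mathbb R^d\setminus\{0\}$ and compactness of $S^{d-1}$, it upgrades to uniform continuity on each dyadic shell, and because $m(\lambda\xi)=m(\xi)$ for $\lambda>0$ the values $m(k)$ on $\mathbb Z^d\setminus\{0\}$ are already exact copies of the values of $m$ on every rescaled lattice, so no symbol-dilation limit is required.

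For $\|T_{\widetilde m}\|\le \|T_m\|$ I would fix a trigonometric polynomial $P(\theta)=\sum_{k\in F} c_k e^{i\langle k,\theta\rangle}$ with $F\subset \mathbb Z^d$ finite and $c_k\in X$, pick a Schwartz $\phi$ with $\widehat\phi$ supported in a small ball $B(0,r)$, and set $\phi_\varepsilon(x):=\phi(\varepsilon x)$. Then $\widehat{P\phi_\varepsilon}(\xi)=\varepsilon^{-d}\sum_{k\in F} c_k\widehat\phi((\xi-k)/\varepsilon)$ is supported on the disjoint balls $B(k,\varepsilon r)$; on each such ball with $k\neq 0$, uniform continuity of $m$ gives $m(\xi)\to m(k)=\widetilde m(k)$ as $\varepsilon\to 0$, while the average of $m$ against the bump over $B(0,\varepsilon r)$ equals $\widetilde m(0)$ by homogeneity and the very definition $\widetilde m(0)=|B(0,1)|^{-1}\int_{B(0,1)} m$. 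Hence $T_m(P\phi_\varepsilon)$ is close to $(T_{\widetilde m}P)\phi_\varepsilon$ in $L^p$-norm, and a Riemann-sum computation yields $\|P\phi_\varepsilon\|_{L^p(\mathbb R^d;X)}\sim (2\pi)^{-d/p}\varepsilon^{-d/p}\|\phi\|_{L^p}\|P\|_{L^p(\mathbb T^d;X)}$ as $\varepsilon\to 0$ (the support of $\phi_\varepsilon$ contains $\sim \varepsilon^{-d}$ periods of $\mathbb T^d$), with the analogous statement for $T_{\widetilde m}P$; dividing and sending $\varepsilon\to 0$ gives the desired bound.

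For the reverse direction, I would dually take $f\in \mathcal S(\mathbb R^d;X)$ with $\widehat f$ compactly supported, reduce via Lemma \ref{lemma:linoperofsymbol} to $\text{supp}\,\widehat f\subset [-\pi,\pi]^d$, and form the $2\pi$-periodic approximand $h_\delta(\theta):=\sum_{k\in \mathbb Z^d} \widehat f(k\delta)e^{i\langle k,\theta\rangle}$ for small $\delta>0$. Poisson summation shows $h_\delta(\theta)\approx (2\pi)^d\delta^{-d} f(\theta/\delta)$ on $\mathbb T^d$ (the off-diagonal terms decay because $f$ is Schwartz), and by homogeneity the Fourier coefficients of $T_{\widetilde m} h_\delta$ are $\widetilde m(k)\widehat f(k\delta)=m(k\delta)\widehat f(k\delta)$ for $k\neq 0$, so that the Riemann sum $\delta^d\sum_k m(k\delta)\widehat f(k\delta)e^{i\langle k\delta,\cdot\rangle}$ evaluated at $\theta/\delta$ converges to $(T_m f)(\theta/\delta)$. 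Comparing $L^p$-norms via the same Poisson calculation and sending $\delta\to 0$ gives $\|T_m f\|_{L^p(\mathbb R^d;X)}\le \|T_{\widetilde m}\|\cdot \|f\|_{L^p}$, and density of compactly-Fourier-supported Schwartz functions in $L^p(\mathbb R^d;X)$ closes the argument.

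The main obstacle I anticipate is precisely the matching condition at $\xi=0$, which is what forces the particular averaging definition $\widetilde m(0)=|B(0,1)|^{-1}\int_{B(0,1)} m$: for $k\neq 0$ pointwise continuity is enough, but at $k=0$ the symbol $m$ is only defined by an average, and the stated value is the unique choice for which the constant-mode contribution of the periodic approximand is transformed correctly in the transference limit (equivalently, it is the value read off by a smooth radial bump at scale $\varepsilon$ via homogeneity). Once this identification is in place, the remainder is a quantitative Riemann-sum/Poisson-summation scheme, with uniform continuity of $m$ on each dyadic shell controlling the error terms in both directions.
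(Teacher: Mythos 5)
The paper does not actually prove this lemma: it is quoted from the de Leeuw transference literature (the surrounding text cites \cite{deL65}, \cite{Graf}, \cite{GM-SS} and \cite[Corollary~5.7.6]{HNVW1}). So your proposal has to be judged against the standard transference proof. Your overall scheme is the right one: both de Leeuw directions, with homogeneity making the usual dilation limit unnecessary for $k\neq 0$ and the ball average supplying the regulated value at the origin. Your second direction (Poisson summation and Riemann sums, where the single lattice point $k=0$ contributes only $O(\delta^d)$ and is therefore negligible) is sound modulo routine technicalities such as a Fatou argument for $p<2$.

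The genuine gap is in the first direction, at the zero Fourier mode --- exactly the point you flag as ``the main obstacle'' but then do not actually resolve. You claim that $T_m(P\phi_\varepsilon)$ is close to $(T_{\widetilde m}P)\phi_\varepsilon$ in $L^p$-norm. For the constant mode this is false: homogeneity gives the exact identity $T_m\bigl(\widehat{P}(0)\phi_\varepsilon\bigr)=\widehat{P}(0)\,(T_m\phi)(\varepsilon\,\cdot)$ for every $\varepsilon>0$, so the error term $\bigl\|\widehat{P}(0)\bigr\|\,\varepsilon^{-d/p}\bigl\|T_m\phi-\widetilde m(0)\phi\bigr\|_{L^p}$ is of the same order $\varepsilon^{-d/p}$ as the main term and does not vanish unless $T_m\phi=\widetilde m(0)\phi$; for $m(\xi)=\xi_1^2/|\xi|^2$ one would need $R_1^2\phi=\tfrac12\phi$, which fails for every nontrivial $\phi$. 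No choice of bump repairs this: the statement ``the average of $m$ against the bump equals $\widetilde m(0)$'' concerns the scalar $\int m\,\widehat\phi$, not the function $\mathcal F^{-1}(m\widehat\phi)$. The standard fixes are: (a) run the transference through the duality pairing $\langle T_m(P\phi_\varepsilon),Q\psi_\varepsilon\rangle$ with $\widehat\phi\,\overline{\widehat\psi}$ radial and of integral one, in which case the zero-frequency contribution is exactly $\int m\,\widehat\phi\,\overline{\widehat\psi}\ud\xi$, which by homogeneity and radiality equals the average of $m$ over $S^{d-1}$, i.e.\ $\widetilde m(0)$; or (b) invoke the averaged de Leeuw inequality $\|T_{\widetilde m_\epsilon}\|_{\mathcal L(L^p(\mathbb T^d;X))}\leq\|T_m\|_{\mathcal L(L^p(\mathbb R^d;X))}$ for $\widetilde m_\epsilon(k):=\frac{1}{|B(k,\epsilon)|}\int_{B(k,\epsilon)}m(s)\ud s$, and let $\epsilon\to 0$: for $k\neq 0$ these averages tend to $m(k)$ by continuity, while for $k=0$ they equal $\widetilde m(0)$ for every $\epsilon$ by the scaling invariance of $m$. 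Either route completes the proof; the $L^p$-norm approximation as you wrote it does not.
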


The following lemma is similar to \cite[Lemma 3.3]{GM-SS}.

\begin{lemma}\label{lem:multidimcasededuction}
 Let $1<p<\infty$, $X$ be a Banach space, $m:\mathbb R^d\to \mathbb C$ be homogeneous such that $m|_{\mathbb R^d\setminus \{0\}}$ is continuous, and let $Q = \mathbb T^d$. For each $k\geq 1$ define $E_k$ as the closure of the finite trigonometric polynomials
 \[
  \Phi_k(\theta_1,\ldots,\theta_k) = \sum_{j=1}^J \Phi^j_k(\theta_1,\ldots,\theta_k)x_p,
 \]
with $x_k\in X$ and 
\[
 \Phi_k^j = \sum_{\ell_1\in \mathbb Z^d}\cdots \sum_{\ell_k \in \mathbb Z^d}e^{i\langle \ell_1,\theta_1\rangle}\cdots e^{i\langle \ell_k,\theta_k\rangle}\alpha_{\ell_1,\ldots,\ell_k}^j,
\]
where only finitely many $\alpha_{\ell_1,\ldots,\ell_k}^j \in \mathbb C$ are nonzero, and $\alpha_{\ell_1,\ldots,\ell_k}^j = 0$ if $\ell_k=0$. Let $T^k_{\widetilde m}:L^p(Q^k)\to L^p(Q^k)$ be given by
\[
 (T^k_{\widetilde m}\Phi_k)(\theta_1,\ldots,\theta_k) = \sum_{j=1}^J \Bigl( \sum_{\ell_1\in \mathbb Z^d}\cdots \sum_{\ell_k \in \mathbb Z^d}m(\ell_k)e^{i\langle \ell_1,\theta_1\rangle}\cdots e^{i\langle \ell_k,\theta_k\rangle}\alpha_{\ell_1,\ldots,\ell_k}^j\Bigr)x_p
\]
for all $\ell_1,\ldots,\ell_k\in\mathbb Z^d$. Then one has that
\[
 \Biggl\| \sum_{k=1}^n(T^k_{\widetilde m}\Phi_k)(\theta_1,\ldots,\theta_k) \Biggr\|_{L^p(Q^n;X)} \leq \|T_{\widetilde m}\|_{\mathcal L(L^p(\mathbb T^d;X))}\Biggl\| \sum_{k=1}^n\Phi_k(\theta_1,\ldots,\theta_k) \Biggr\|_{L^p(Q^n;X)}.
\]
\end{lemma}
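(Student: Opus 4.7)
My plan is to express $G := \sum_{k=1}^n T^k_{\widetilde m}\Phi_k$ as $T_M F$, where $F := \sum_{k=1}^n\Phi_k$ and $M$ is a Fourier multiplier on $L^p(\mathbb T^{dn};X)$, and then to bound $\|T_M F\|_{L^p}$ by a Coifman--Weiss style transference of $T_{\widetilde m}$ on $\mathbb T^d$. Because of the hypothesis $\alpha^j_{\ell_1,\ldots,\ell_k} = 0$ whenever $\ell_k = 0$, the Fourier spectrum of $F$ is contained in
$$S := \{\ell \in \mathbb Z^{dn}\setminus\{0\} : \ell_{k+1} = \cdots = \ell_n = 0 \text{ and } \ell_k \neq 0 \text{ for some } k\},$$
and the Fourier coefficient of $F$ at $(\ell_1,\ldots,\ell_k,0,\ldots,0) \in S$ agrees with that of $\Phi_k$ at $(\ell_1,\ldots,\ell_k)$. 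Setting $M(\ell) := m(\ell_k)$ for $\ell \in S$ with last nonzero index $k$, and $M := 0$ elsewhere, a direct check from the definitions gives $T_M F = G$.

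To estimate $\|T_M F\|_{L^p}$ I approximate $M$. For each large integer $N$, let $L_N(\ell_1,\ldots,\ell_n) := \ell_1 + N\ell_2 + \cdots + N^{n-1}\ell_n$ and $M_N := \widetilde m \circ L_N$. For $\ell \in S$ with last nonzero index $k$ we have $L_N(\ell) = N^{k-1}\ell_k + O(N^{k-2})$, which is nonzero for $N$ large and whose direction converges to that of $\ell_k$; by continuity and homogeneity of $m$ on $\mathbb R^d\setminus\{0\}$ this forces $\widetilde m(L_N(\ell)) = m(L_N(\ell)) \to m(\ell_k) = M(\ell)$. Since $F$ has finite Fourier support, it follows that $T_{M_N}F \to T_M F = G$ in $L^p(\mathbb T^{dn};X)$ as $N \to \infty$.

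The core technical step is the transference estimate $\|T_{M_N}\|_{\mathcal L(L^p(\mathbb T^{dn};X))} \leq \|T_{\widetilde m}\|_{\mathcal L(L^p(\mathbb T^d;X))}$. Introduce an auxiliary variable $\eta \in \mathbb T^d$ and, for each $\theta \in \mathbb T^{dn}$, set $\widetilde f_\theta(\eta) := f(\theta_1 + \eta, \theta_2 + N\eta, \ldots, \theta_n + N^{n-1}\eta)$. A direct Fourier calculation yields the key identity $(T_{\widetilde m}\widetilde f_\theta)(\eta) = (T_{M_N}f)(\theta_1 + \eta, \theta_2 + N\eta, \ldots, \theta_n + N^{n-1}\eta)$; integrating the $p$-th power over $(\theta,\eta) \in \mathbb T^{dn} \times \mathbb T^d$ and using Fubini together with the translation invariance of Haar measure on the tori yields the operator-norm bound on $T_{M_N}$. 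Letting $N \to \infty$ in $\|T_{M_N}F\|_{L^p} \leq \|T_{\widetilde m}\|\cdot \|F\|_{L^p}$ gives the lemma. The main subtlety is carrying out this transference step cleanly; the approximation step is automatic from the homogeneity of $m$.
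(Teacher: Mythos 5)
Your argument is correct and is essentially a reconstruction of the proof of \cite[Lemma 3.3]{GM-SS} to which the paper's proof defers: the same lacunary substitution $L_N(\ell)=\ell_1+N\ell_2+\cdots+N^{n-1}\ell_n$ turning the block multiplier into $\widetilde m\circ L_N$, and the same transference of $\|T_{\widetilde m}\|_{\mathcal L(L^p(\mathbb T^d;X))}$ via the change of variables $\theta_j\mapsto\theta_j+N^{j-1}\eta$ together with Fubini and translation invariance. Your remark that pointwise convergence $M_N(\ell)\to m(\ell_k)$ over the finite Fourier support of $F$ suffices neatly replaces the paper's revised last step (uniform continuity of $m$ on a compact set bounded away from the origin), so no further work is needed.
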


\begin{proof}
 The proof is essentially the same as the one of \cite[Lemma 3.3]{GM-SS}, but to cover all continuous symbols $m$ (instead of $m|_{\mathbb R^d\setminus \{0\}}\in C^1$, as was assumed in the original proof), we need to change the last step. Therefore we will not repeat the whole original proof, but only present the revised last step.
 
 Let $\mathcal L^k\subset \mathbb Z^d$ be the set of all $\ell \in \mathbb Z^d$ such that $\alpha^j_{\ell_1,\ldots,\ell_{k-1},\ell}\neq 0$ for some $j\in \{1,\ldots,J\}$ and $\ell_1,\ldots,\ell_{k-1}\in\mathbb Z^d$. Then $0\notin \mathcal L^k$ and $\mathcal L^k$ is bounded. Now let $\widetilde{\mathcal L}^k\subset \mathbb Z^d$ be such that $\widetilde{\mathcal L}^k =\cup_{\alpha_{\ell_1,\ldots,\ell_k}^j \neq 0}(\ell_1\cup\cdots\cup \ell_k)$. Then $\widetilde{\mathcal L}^k$ is bounded as well. Let $R :=\sup_{\ell\in \widetilde{\mathcal L}^k}\|\ell\|$. In the end of the proof of \cite[Lemma 3.3]{GM-SS} one needed to show that $|m(\ell_1 A^{-k+1} + \ldots+ \ell_{k-1}A^{-1} + \ell) - m(\ell)| \to 0$ as $A\to \infty$ for each fixed $\ell_1,\ldots,\ell_{k-1}\in  \widetilde{\mathcal L}^k$ uniformly in $\ell \in \mathcal L^k$. Without loss of generality assume that $A>2k^2R^2$. Let $\mathcal O^k \subset \mathbb R^d$ be as follows: $\mathcal O^k := \mathcal L^k + \frac{1}{kR}\widetilde{\mathcal L}^k$. Then $\mathcal O^k$ is a compact that does not contain $\{0\}$, and therefore $m|_{\mathcal O^k}$ is continuous, hence uniform continuous. Therefore, since 
 $$
 \|\ell_1 A^{-k+1} + \ldots+ \ell_{k-1}A^{-1}\| \leq A^{-1}kR,\;\;\; \ell_1,\ldots,\ell_{k-1}\in  \widetilde{\mathcal L}^k,
 $$
 $|m(\ell_1 A^{-k+1} + \ldots+ \ell_{k-1}A^{-1} + \ell) - m(\ell)| \to 0$ as $A\to \infty$ uniformly in $\ell \in \mathcal L^k$ for each $\ell_1,\ldots,\ell_{k-1}\in  \widetilde{\mathcal L}^k$ by uniform continuity of $m$ on $\mathcal O^k$.
\end{proof}

\begin{lemma}\label{lem:extenddimforFourmult}
 Let $X$ be a Banach space, $1<p<\infty$, $d_2>d_1$, $m_1:\mathbb R^{d_1}\to \mathbb C$, $m_2:\mathbb R^{d_2}\to \mathbb C$ be such that 
 $$
 m_2(x_1,\ldots,x_{d_2}) = m_1(x_1,\ldots,x_{d_1})\;\; \text{for each}\;\; (x_1,\ldots,x_{d_2})\in \mathbb R^{d_2}.
 $$
 Then $\|T_{m_1}\|_{\mathcal L(L^p(\mathbb R^{d_1};X))} = \|T_{m_2}\|_{\mathcal L(L^p(\mathbb R^{d_2};X))}$.
\end{lemma}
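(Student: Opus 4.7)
The plan is to prove the two inequalities separately, both relying on the observation that $T_{m_2}$ acts on functions of $d_2$ variables by acting on the first $d_1$ variables only (with the remaining variables as parameters), because $m_2$ does not depend on $\xi_{d_1+1},\ldots,\xi_{d_2}$.

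For the bound $\|T_{m_2}\|\le \|T_{m_1}\|$, I would take $F \in \mathcal S(\mathbb R^{d_2})\otimes X$, write $x=(y,z)$ with $y\in\mathbb R^{d_1}$, $z\in\mathbb R^{d_2-d_1}$, and use the Fourier transform factorization (or Fubini on the frequency side) to conclude that for a.e.\ $z$,
\[
 \bigl(T_{m_2}F\bigr)(\cdot,z) = T_{m_1}\bigl(F(\cdot,z)\bigr) \quad \text{in } L^p(\mathbb R^{d_1};X).
\]
Integrating the $p$-th power over $z$ and applying the definition of $\|T_{m_1}\|$ pointwise in $z$ gives
\[
 \|T_{m_2}F\|_{L^p(\mathbb R^{d_2};X)}^p \le \|T_{m_1}\|^p \int_{\mathbb R^{d_2-d_1}} \|F(\cdot,z)\|_{L^p(\mathbb R^{d_1};X)}^p \ud z = \|T_{m_1}\|^p \|F\|_{L^p(\mathbb R^{d_2};X)}^p.
\]

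For the reverse bound $\|T_{m_1}\|\le \|T_{m_2}\|$, I would tensorize: pick any nonzero $g\in\mathcal S(\mathbb R^{d_2-d_1})$ and for $f\in\mathcal S(\mathbb R^{d_1})\otimes X$ set $F(y,z):= f(y)g(z)$. Since $\mathcal F$ factors across the tensor product and $m_2$ is independent of the $z$-frequency variables, $T_{m_2}F = (T_{m_1}f)\otimes g$. Taking $L^p$ norms and dividing by $\|g\|_{L^p}\ne 0$ yields $\|T_{m_1}f\|_{L^p(\mathbb R^{d_1};X)}\le \|T_{m_2}\|\|f\|_{L^p(\mathbb R^{d_1};X)}$.

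Neither step is genuinely hard; the only technicality is to justify the multiplier factorization $\bigl(T_{m_2}F\bigr)(\cdot,z) = T_{m_1}(F(\cdot,z))$ rigorously for $F$ in a dense subspace. On $\mathcal S(\mathbb R^{d_2})\otimes X$ this is immediate from $\mathcal F^{-1}\bigl(m_2 \mathcal F F\bigr)(y,z)$ being the partial Fourier inversion in $y$ of $m_1(\eta)\mathcal F_y(F(\cdot,z))(\eta)$, which is precisely $T_{m_1}$ applied slice-wise; density of simple tensors in $L^p(\mathbb R^{d_2};X)$ together with continuity then upgrades the slice-wise identity to the norm estimate.
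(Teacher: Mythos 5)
Your proof is correct and is essentially the argument the paper invokes: the paper's proof consists of a reference to \cite[Theorem 2.5.16]{Graf}, whose proof is exactly your two steps (slicewise action of $T_{m_2}$ in the first $d_1$ variables plus Fubini for $\|T_{m_2}\|\le\|T_{m_1}\|$, and tensorization with a fixed $g$ for the reverse inequality).
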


\begin{proof}
The proof is analogous to the one given in \cite[Theorem 2.5.16]{Graf}.
\end{proof}

\begin{proof}[Proof of Theorem \ref{thm:multlowerbound}]
 The given proof is the modification of the one of \cite[Theorem~1.4]{BO12}. First fix $\{a_1,\ldots,a_K\} := A_K \subset A$ and show that \
 $$
 \|T_{m}\|_{\mathcal L(L^p(\mathbb R^{d};X))}\geq \beta_{p,X}^{A_K}.
 $$ 
 By Lemma \ref{lem:extenddimforFourmult} we can extend the dimension and assume that $d:= d+K$ by adding $K$ coordinates so that $m$ does not depend on the last $K$ coordinates. Since $m|_{\mathbb R^d\setminus \{0\}}$ is homogeneous and continuous, and by the fact that the unit sphere in $\mathbb R^d$ is compact, there exist unit vectors $e_1,\ldots,e_K$ in $\mathbb R^d$ such that $m(e_k)=a_k$ for each $k=1,\ldots,K$. Without loss of generality we may assume that $(e_k)_{k=1}^K$ are linearly independent (otherwise we can change the last $K$ coordinates of the vectors $(e_k)_{k=1}^K$ to make them linearly independent, and later renormalize them). By Lemma \ref{lemma:linoperofsymbol} we can find an~appropriate transform $S$ of $\mathbb R^d$ such that $(e_k)_{k=1}^{K}$ will be the first $K$ basis vectors of $\mathbb R^d$, i.e.\ $(m\circ S)(e_k) = a_k$ for each $k=1,\ldots,K$.
 
 Fix $\eps>0$. By \eqref{eq:UMD_p^Aconstant} and \cite[Theorem 3.6.1]{HNVW1} there exists an $X$-valued Paley-Walsh martingale $(f_n)_{n=0}^N$ and coefficients $(b_n)_{n=0}^N$ with values in $A_K$ such that
 \begin{equation}\label{eq:prooflowerboundsPWmart}
    \mathbb E \Bigl\|\sum_{n =0}^N b_ndf_n\Bigr\|^p\geq (\beta_{p, X}^{A_K}-\eps)^p  \mathbb E \|f_N\|^p.
 \end{equation}
Let $(r_n)_{n=1}^N$ be a Rademacher sequence, $\phi_n:\{-1,1\}^{n-1} \to X$ for each $n=1,\ldots,N$ be such that $df_n = r_n \phi(r_1,\ldots,r_{n-1})$ for each $n=1,\ldots,N$.

Now let $Q = \mathbb T^d$. For each $n=1,\ldots,N$ define $R_n:Q \to \mathbb R$ as follows: $R_n(\theta_{1},\ldots,\theta_{d}) = \text{sign}(\theta_{s(n)})$, where $s(n)$ is such that $s(n) = k$ if $b_n = a_k$. Define function $f:Q^N \to X$ as follows:
\[
 f(\theta^1,\ldots,\theta^N) = \sum_{n=0}^N R_n(\theta^n)\phi_n(R_1(\theta^1),\ldots,R_{n-1}(\theta^{n-1})),
\]
for each $\theta^1,\ldots,\theta^N \in Q$. Notice that $(T_{\widetilde m}R_n)(\theta) = b_n R_n(\theta)$ for each $n=1,\ldots,N$ due to the fact that $R_n(\theta)$ depends only on $\theta_{s(n)}$, $\int_{Q}R_n(\theta) \ud \theta = 0$, and the fact that $\widetilde m(j e_{s(n)}) = b_n$ for any $j\in \mathbb Z\setminus \{0\}$. Therefore by Lemma \ref{lem:deLeeuw} and \ref{lem:multidimcasededuction}
\begin{multline}\label{eq:lowerboundstroughQ^N}
 \|T_m\|_{\mathcal L(L^p(\mathbb R^{d};X))} = \|T_{\widetilde m}\|_{\mathcal L(L^p(\mathbb T^{d};X))}\\
 \geq \frac{\|\sum_{n=1}^Nb_n R_n(\theta^n)\phi_n(R_1(\theta^1),\ldots,R_{n-1}(\theta^{n-1}))\|_{L^p(Q^N;X)}}{\|\sum_{n=1}^N R_n(\theta^n)\phi_n(R_1(\theta^1),\ldots,R_{n-1}(\theta^{n-1}))\|_{L^p(Q^N;X)}}.
\end{multline}
Now notice that after renormalization of $Q^N$, $(R_n(\theta^n))_{n=1}^N$ are independent Rademacher's, and therefore
\begin{align*}
 \Bigl\|\sum_{n=1}^Nb_n R_n(\theta^n)\phi_n(R_1(\theta^1),\ldots,R_{n-1}(\theta^{n-1}))\Bigr\|_{L^p(Q^N;X)}^p &= \mathbb E \Bigl\|\sum_{n =0}^N b_ndf_n\Bigr\|^p,\\
 \Bigl\|\sum_{n=1}^N R_n(\theta^n)\phi_n(R_1(\theta^1),\ldots,R_{n-1}(\theta^{n-1}))\Bigr\|_{L^p(Q^N;X)}^p &= \mathbb E \|f_N\|^p,
\end{align*}
and so by \eqref{eq:prooflowerboundsPWmart} and \eqref{eq:lowerboundstroughQ^N}
\[
 \|T_m\|_{\mathcal L(L^p(\mathbb R^{d};X))} \geq \beta_{p,X}^{A_N} - \eps.
\]
Since $\eps >0$ was arbitrary and due to Proposition \ref{prop:propertiesofUMP_p,X^A}(x) (we can choose $(A_N)_{N\geq 1}$ such that $\text{Conv}(A_N)\nearrow \overline{\text{Conv}(A)}$ as $N\to \infty$ by choosing a dense sequence $(a_n)_{n=1}^{\infty}$ in $A$), the desired holds.
\end{proof}

Our first corollary will be about the so-called {\it Beurling-Ahlfors transform}, which is denoted by $\mathcal {BA}$ and is defined as follows:
\[
 (\mathcal {BA}\; f)(z) = -\frac{1}{\pi}\int_{\mathbb C}\frac{f(w)}{(z-w)^2}\ud w,\;\;\; z\in \mathbb C\simeq \mathbb R^2.
\]

\begin{corollary}\label{cor:BAtranslowerbounds}
 Let $X$ be a Banach space, $1<p<\infty$. Then 
 $$
 \|\mathcal {BA}\|_{\mathcal L(L^p(\mathbb R^2;X))}\geq \beta_{p,X}^{\mathbb D}.
 $$
\end{corollary}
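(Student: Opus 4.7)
The plan is to exhibit $\mathcal{BA}$ as a Fourier multiplier whose symbol meets the hypotheses of Theorem \ref{thm:multlowerbound}, then combine that theorem with property (ii) of Proposition \ref{prop:propertiesofUMP_p,X^A}.

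First I would recall the classical identification of the Beurling-Ahlfors transform as the Fourier multiplier with symbol
\[
 m(\xi) = \frac{\bar\xi}{\xi} = \frac{(\xi_1 - i\xi_2)^2}{\xi_1^2 + \xi_2^2},\qquad \xi = \xi_1 + i\xi_2 \in \mathbb{C} \simeq \mathbb{R}^2\setminus\{0\}.
\]
It is immediate that $m$ is homogeneous of degree $0$, continuous on $\mathbb{R}^2\setminus\{0\}$, and even (the numerator is invariant under $\xi \mapsto -\xi$). Writing $\xi = re^{i\theta}$ one sees that $m(\xi) = e^{-2i\theta}$, and as $\theta$ ranges over $[0,2\pi)$ this sweeps out the entire unit circle. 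Hence $\textnormal{Ran}(m|_{\mathbb{R}^2\setminus\{0\}}) = \partial \mathbb{D}$.

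Now I would apply Theorem \ref{thm:multlowerbound} with $A = \partial\mathbb{D}$ to get
\[
 \|\mathcal{BA}\|_{\mathcal{L}(L^p(\mathbb{R}^2;X))} = \|T_m\|_{\mathcal{L}(L^p(\mathbb{R}^2;X))} \geq \beta_{p,X}^{\partial \mathbb{D}}.
\]
Finally, since $\overline{\textnormal{Conv}(\partial \mathbb{D})} = \mathbb{D}$, part (ii) of Proposition \ref{prop:propertiesofUMP_p,X^A} yields $\beta_{p,X}^{\partial \mathbb{D}} = \beta_{p,X}^{\mathbb{D}}$, and the corollary follows.

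There is no real obstacle here: the corollary is essentially a dictionary entry for $\mathcal{BA}$ fed into the general lower bound. The only thing one must be careful about is to phrase the symbol so that evenness and homogeneity are manifest (squaring $\bar\xi/|\xi|$ is precisely what supplies the evenness), and to invoke the convex-hull invariance (ii) so that the lower bound becomes $\beta_{p,X}^{\mathbb{D}}$ rather than merely $\beta_{p,X}^{\partial\mathbb{D}}$.
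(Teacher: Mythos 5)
Your proposal is correct and follows essentially the same route as the paper: identify the symbol of $\mathcal{BA}$ as $\bar\xi/\xi$, note that it is an even homogeneous symbol with range the unit circle, apply Theorem \ref{thm:multlowerbound}, and then use Proposition \ref{prop:propertiesofUMP_p,X^A}(ii) to pass from $\beta_{p,X}^{\partial\mathbb D}$ to $\beta_{p,X}^{\mathbb D}$ via the closed convex hull. The paper's proof is just a more terse version of exactly this argument.
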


\begin{proof}
 It is well-known (see \cite[p.\ 493]{HNVW1}) that $\mathcal {BA}$ is a Fourier multiplier with a~symbol $m(z) = \frac{\bar{z}}{z}$, $z\in\mathbb C \simeq \mathbb R^2$. Therefore $\text{Conv}(\text{Ran}(m)) = \mathbb D$, and the statement follows from Theorem \ref{thm:multlowerbound} and Proposition \ref{prop:propertiesofUMP_p,X^A}(ii).
\end{proof}

\begin{remark}\label{rem:BAtranssharpbounds}
 The norm of the Beurling-Ahlfors transform even in the scalar-valued case remains an open problem (see Iwaniec conjecture e.g.\ in \cite[Problem~O.1]{HNVW1}). Corollary \ref{cor:BAtranslowerbounds} together with \cite[Corollary 3.2]{GM-SS} gives us the following general estimates
 \[
  \beta_{p, X}^{\mathbb D} \leq \|\mathcal {BA}\|_{\mathcal L(L^p(\mathbb R^2;X))}\leq 2\beta_{p,X}^{\{-1,1\}},
 \]
which is a semiextension (up to the conjecture on equality of $\beta_{p, X}^{\mathbb D}$ and $\beta_{p,X}^{\{-1,1\}}$, see Remark \ref{rem:beta-11vsbetaD}) of the known due to \cite{GM-SS} estimate
\[
\beta_{p,X}^{\{-1,1\}}\leq \|\mathcal {BA}\|_{\mathcal L(L^p(\mathbb R^2;X))} \leq2\beta_{p,X}^{\{-1,1\}}.
\]
\end{remark}

\subsection{Upper bounds}

In the current subsection we will show sharp upper bounds for Fourier multipliers with symbols of the form \eqref{eq:defofm(xi)}, which generalize \cite[Theorem~1.7]{BO12} to the UMD space case and give more precise estimates for the results from \cite{Y17FourUMD}, which now depend on the range of $\phi$ and $\psi$ from \eqref{eq:defofm(xi)}.

Let $V$ be a L\'evy measure on $\mathbb R^d$, that is $V(\{0\})=0$, $V \neq 0$ 
and
$$
\int_{\mathbb R^d}(|x|^2\wedge 1) V(\dd x) <\infty.
$$
Let $\mu\geq 0$ be a 
finite Borel measure on the unit sphere $S^{d-1} \subset \mathbb R^d$. Finally let $A\subset \mathbb C$ be bounded, closed, and convex (these assumptions do not restrict the generality), $\phi \in L^{\infty}( \mathbb R^d ; \mathbb C)$ and
$\psi\in L^{\infty}( S^{d-1}; \mathbb C)$ be with values in $A$. The symbol of the subsection is
 \begin{equation}\label{eq:defofm(xi)}
  m(\xi) = \frac{\int_{\mathbb R^d} (1-\cos (\xi \cdot z))\phi(z)V(\dd z) + 
\frac 12\int_{S^{d-1}}(\xi \cdot \theta)^{2}\psi(\theta)\mu(\dd\theta)}{\int_{\mathbb 
R^d} (1-\cos (\xi \cdot z)z)V(\dd z)+ \frac 12\int_{S^{d-1}}(\xi \cdot 
\theta)^{2}\mu(\dd\theta)},\;\; \xi \in \mathbb R^d,
 \end{equation}
where we assume $\frac{c}{0}=0$ for each $c\in\mathbb C$.

\begin{theorem}\label{thm:multupperbound}
 Let $X$ be a UMD Banach space, $A \subset \mathbb C$, $\phi \in L^{\infty}(\mathbb R^d)$ and $\psi \in L^{\infty}(S^{d-1})$ be with values in $A$. Then the Fourier multiplier $T_m$ with a~symbol $m$ given in \eqref{eq:defofm(xi)}
 has a bounded extension on $L^p(\mathbb R^d; X)$ for $1<p<\infty$. Moreover, 
then for each $f\in L^p(\mathbb R^d; X)$
 \begin{equation}\label{eq:multthm}
   \|T_m f\|_{L^p(\mathbb R^d;X)}\leq \beta_{p,X}^A\|f\|_{L^p(\mathbb R^d;X)}.
 \end{equation}
\end{theorem}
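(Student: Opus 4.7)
I would follow the Bañuelos--Bogdan space-time strategy, as carried out for UMD spaces in \cite{Y17FourUMD}, with every use of weak differential subordination and $\beta_{p,X}$ replaced by $A$-weak differential subordination and $\beta_{p,X}^A$, and with Theorem \ref{thm:bBwdsforpurdisc} playing the role of the previous $L^p$-bound. By Proposition \ref{prop:propertiesofUMP_p,X^A}(ii) I may assume $A$ closed and convex. By density of $\mathcal S(\mathbb R^d)\otimes X$ in $L^p(\mathbb R^d;X)$ and by the duality $\|\cdot\|_{L^p(X)}=\sup_{\|g\|_{L^{p'}(X^*)}\leq 1}|\langle\cdot,g\rangle|$, it suffices to bound $|\langle T_mf,g\rangle|$ for $f\in\mathcal S(\mathbb R^d)\otimes X$ and $g\in\mathcal S(\mathbb R^d)\otimes X^*$.

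\textbf{Reduction to a purely jump symbol.} Following the discretisation arguments in \cite{BB} and \cite{Y17FourUMD}, the Gaussian piece $\tfrac12\int_{S^{d-1}}(\xi\cdot\theta)^{2}\psi(\theta)\mu(\dd\theta)$ of the numerator (and the analogous term in the denominator) is approximated by compound Poisson contributions. Given a discrete approximation $\mu\approx\sum_j\lambda_j\delta_{\theta_j}$ and a small parameter $\eps\downarrow 0$, replace each atom $\lambda_j\delta_{\theta_j}$ by the symmetric Lévy measure $\lambda_j\eps^{-2}(\delta_{\eps\theta_j}+\delta_{-\eps\theta_j})$ on $\mathbb R^d$, using $1-\cos(\eps\xi\cdot\theta_j)=\tfrac12\eps^2(\xi\cdot\theta_j)^2+O(\eps^4)$. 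The resulting symbols $m_n$ are of the same form \eqref{eq:defofm(xi)} with $\mu=0$, a finite Lévy measure $V_n$, and a single $A$-valued function $\phi_n:\mathbb R^d\to A$ that extends $\phi$ by the values of $\psi$ along the added directions; one has $m_n\to m$ pointwise on $\mathbb R^d$ and $T_{m_n}f\to T_m f$ in $L^p(\mathbb R^d;X)$ for Schwartz $f$ by dominated convergence on the Fourier side. Thus it suffices to prove \eqref{eq:multthm} when $\mu=0$ and $V$ is a finite Lévy measure with $A$-valued $\phi$.

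\textbf{Martingale construction and the key estimate.} Let $Z=(Z_t)_{t\geq 0}$ be the compound Poisson process with Lévy measure $V$, $(P_s)_{s\geq 0}$ its semigroup, and fix $T>0$ and $x\in\mathbb R^d$. Define
\[
 M_t^{x,T} := P_{T-t}f(x+Z_t),\quad 0\leq t\leq T,
\]
a purely discontinuous $X$-valued martingale whose jumps equal $P_{T-t}f(x+Z_{t-}+\Delta Z_t)-P_{T-t}f(x+Z_{t-})$. Define $N^{x,T}$ by $N_0^{x,T}:=M_0^{x,T}$ and by the jump rule $\Delta N_t^{x,T}:=\phi(\Delta Z_t)\,\Delta M_t^{x,T}$, realised as a stochastic integral against the compensated Poisson random measure of $Z$. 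Since $\phi$ is $A$-valued, $N^{x,T}$ is purely discontinuous and $N^{x,T}\stackrel{w,A}{\ll}M^{x,T}$ in the sense of the definition preceding Remark \ref{rem:Awdsintermsofquadrvar}; Lemma \ref{lem:a(t,omega)forbBwds} (read in reverse) is the point at which a single $A$-valued jump coefficient $\phi(\Delta Z_t)$ is seen to govern all dual pairings simultaneously. Theorem \ref{thm:bBwdsforpurdisc} then yields $\mathbb E\|N_T^{x,T}\|^p\leq(\beta_{p,X}^A)^p\,\mathbb E\|M_T^{x,T}\|^p$.

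\textbf{Projection and closing.} A Fubini computation in $x\in\mathbb R^d$ against the test function $g$, combined with Plancherel applied to $P_sf=\mathcal F^{-1}(e^{-s\lambda(\cdot)}\mathcal Ff)$ where $\lambda(\xi)=\int(1-\cos(\xi\cdot z))V(\dd z)$, and the symmetrisation $z\leftrightarrow -z$, identifies the asymptotics $T^{-1}\int_{\mathbb R^d}\mathbb E\langle N_T^{x,T},g(x+Z_T)\rangle\,\dd x\to\langle T_mf,g\rangle$ and $T^{-1}\int_{\mathbb R^d}\mathbb E\|M_T^{x,T}\|^p\,\dd x\to\|f\|_{L^p(\mathbb R^d;X)}^p$ as $T\to\infty$ (cf.\ \cite[\S4]{BB} and the corresponding step in \cite{Y17FourUMD}). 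Combining this with the pointwise $L^p$-bound of the previous paragraph and then unwinding the reduction of paragraph 2 yields \eqref{eq:multthm}. The main obstacle is the simultaneous bookkeeping of the two limits (the Gaussian-to-jump approximation of $\mu$ and the large-time limit $T\to\infty$), and the verification that the simple jump identity $\Delta N=\phi(\Delta Z)\Delta M$ really implements $A$-weak differential subordination against all functionals; this latter point is exactly the content of Lemma \ref{lem:a(t,omega)forbBwds}, after which no new machinery beyond Theorem \ref{thm:bBwdsforpurdisc} is required.
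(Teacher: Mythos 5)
Your proposal is correct and follows essentially the same route as the paper, which simply symmetrizes $\phi$, $\psi$, $V$, $\mu$ (noting that the symmetrized functions remain $A$-valued since $A$ may be taken convex) and then reruns the Bañuelos--Bogdan/\cite{Y17FourUMD} space-time martingale argument with Theorem \ref{thm:bBwdsforpurdisc} in place of the ordinary weak differential subordination bound. Your write-up in fact supplies more of the intermediate detail (Gaussian-to-jump approximation, the jump rule $\Delta N=\phi(\Delta Z)\Delta M$, the Fubini projection) than the paper's own proof, which delegates these steps to the cited references.
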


\begin{proof}
 First of all symmetrize $\phi$, $\psi$, $V$, and $\mu$ as it was done in \cite{BBB}. Notice that the new $\phi^*$ and $\psi^*$ still take values in $A$, since they are convex combinations of the former $\phi$ and $\psi$ (for instance, $\phi^*(\xi) = \frac{1+k(\xi)}{2}\phi(\xi) + \frac{1-k(\xi)}{2}\phi(-\xi)$ for some $k(\xi) \in [-1,1]$ for each $\xi \in \mathbb R^d$). The rest of the proof is analogous to the ones given in \cite{Y17FourUMD}, \cite{BB}, and \cite{BBB}, but one needs to use $A$-weak differential subordination, namely Theorem \ref{thm:bBwdsforpurdisc}, instead of weak differential subordination.
\end{proof}

\subsection{Examples}

Now we will present some examples of Fourier multipliers with symbols of the form \eqref{eq:defofm(xi)}. It turns out that Theorem \ref{thm:multlowerbound} is applicable for many of them.

\begin{example}
The following examples of Fourier multiplier satisfying \eqref{eq:defofm(xi)} are similar to the ones given in \cite{BB,BBB,BO12,Y17FourUMD}. Let $\psi\in L^{\infty}(S^{d-1})$ be with values in $A\subset \mathbb C$. Then for both symbols

  \begin{equation}\label{eq:exampletothepoweralpha}
    m(\xi) =\frac{ \int_{S^{d-1}}|(\xi \cdot 
\theta)|^{\alpha}\psi(\theta)\mu(\dd\theta)}{\int_{S^{d-1}}|(\xi \cdot 
\theta)|^{\alpha}\mu(\dd\theta)}, \;\;\; \xi \in \mathbb R^d,\;\;\alpha \in (0,2],
  \end{equation}
  
   \[
  m(\xi)= \frac{\int_{S^{d-1}} \ln(1+(\xi\cdot \theta)^{-2})\psi(\theta)\mu(\dd 
\theta)}{\int_{S^{d-1}} \ln(1+(\xi\cdot \theta)^{-2})\mu(\dd \theta)},\;\;\;\xi 
\in \mathbb R^d,
 \]
 we have that $\|T_m\|_{\mathcal L(L^p(\mathbb R^d;X))}\leq \beta_{p,X}^A$.
\end{example}

\begin{example}{\cite[(1.27)]{BO12}}
 Let $d=2n$, $\alpha \in (0,2]$. If
 \[
  m(\xi) = \frac{|\xi_1^2 +\cdots +\xi_n^2|^{\alpha/2}}{|\xi_1^2 +\cdots +\xi_n^2|^{\alpha/2} + |\xi_{n+1}^2 +\cdots +\xi_{2n}^2|^{\alpha/2}},\;\;\; \xi \in \mathbb R^d,
 \]
then $\|T_{m}\|_{\mathcal L(L^p(\mathbb R^d;X))} = \beta_{p,X}^{\{0,1\}}$ (``$\leq$'' follows from Theorem \ref{thm:multupperbound}, ``$\geq$'' follows from Theorem \ref{thm:multlowerbound}).
\end{example}

\begin{example}\label{ex:a_1a_dalphain02}
 Let $\alpha \in (0,2]$, $a_1,\ldots,a_d\in \mathbb C$. Let $(e_j)_{j=1}^d$ be an orthonormal basis of $\mathbb R^d$, $\mu = \delta_{e_1}+ \ldots + \delta_{e_d}$, $\psi(e_j) = a_j$ for each $j=1,\ldots,d$. Then $m$ from \eqref{eq:exampletothepoweralpha} becomes
 \[
  m(\xi) = \frac{a_1|\xi_1|^{\alpha} + \cdots + a_d|\xi_d|^{\alpha}}{|\xi_1|^{\alpha} + \cdots + |\xi_d|^{\alpha}},\;\;\; \xi \in \mathbb R^d,
 \]
and one has $\|T_m\|_{L^p(\mathbb R^d;X)} = \beta_{p,X}^{\{a_1,\ldots,a_d\}}$ (``$\leq$'' follows from Theorem \ref{thm:multupperbound}, while ``$\geq$'' follows from Theorem \ref{thm:multlowerbound}). In particular $\|a_1 R_1^2 + \cdots + a_d R_d^2\|_{L^p(\mathbb R^d;X)} = \beta_{p,X}^{\{a_1,\ldots,a_d\}}$, where $R_j$ is the corresponding Riesz transform.

This also means that the bounds provided in both Theorem~\ref{thm:multlowerbound} and Theorem~\ref{thm:multupperbound} are sharp. 
\end{example}

\begin{example}
 Let $\alpha \in (0,2]$, $c\geq 0$. Let 
 \[
  m_c(\xi) = \frac{|\xi_1|^{\alpha}}{c + |\xi_1|^{\alpha} + \cdots + |\xi_d|^{\alpha}},\;\;\; \xi \in \mathbb R^d.
 \]
Then $\|T_{m_c}\|_{\mathcal L(L^p(\mathbb R^d;X))}=\beta_{p,X}^{\{0,1\}}$. To show this first notice that by Example \ref{ex:a_1a_dalphain02} we may assume that $c>0$, and that then due to Lemma \ref{lemma:linoperofsymbol} we have that $\|T_{m_c}\|_{\mathcal L(L^p(\mathbb R^d;X))}$ does not depend on $c$. Let
\[
 M(\xi) =  \frac{|\xi_1|^{\alpha}}{|\xi_1|^{\alpha} + \cdots + |\xi_{d+1}|^{\alpha}},\;\;\; \xi \in \mathbb R^{d+1}.
\]
Then by Example \ref{ex:a_1a_dalphain02} $\|T_{M}\|_{\mathcal L(L^p(\mathbb R^{d+1};X))}=\beta_{p,X}^{\{0,1\}}$, and hence by \cite[Theorem 2.5.16]{Graf} for a.e.\ fixed $\xi_{d+1}\in \mathbb R$
\[
 \|T_{m_{|\xi_{d+1}|^{\alpha}}}\|_{\mathcal L(L^p(\mathbb R^d;X))} \leq \|T_{M}\|_{\mathcal L(L^p(\mathbb R^d;X))}=\beta_{p,X}^{\{0,1\}},
\]
so $\|T_{m_c}\|_{\mathcal L(L^p(\mathbb R^d;X))}\leq\beta_{p,X}^{\{0,1\}}$ for each $c>0$ since the norm does not depend on~$c$. On the other hand by Example \ref{ex:a_1a_dalphain02} and analogously to \cite[Proposition 5.5.2]{HNVW1}, 
$$
\beta_{p,X}^{\{0,1\}} \leq \|T_{m_0}\|_{\mathcal L(L^p(\mathbb R^d;X))}\leq \|T_{m_c}\|_{\mathcal L(L^p(\mathbb R^d;X))}.
$$
\end{example}

\begin{example}\label{ex:rho(alpha)in02}
 Let $\rho$ be a probability measure on $(0,2]$, $a_1,\ldots,a_d\in \mathbb C$. If
 \begin{equation}\label{eq:symbmwithprmeasure}
  m(\xi) = \int_{\alpha \in (0,2]}\frac{a_1|\xi_1|^{\alpha} + \cdots + a_d|\xi_d|^{\alpha}}{|\xi_1|^{\alpha} + \cdots + |\xi_d|^{\alpha}}\rho(\dd \alpha),\;\;\; \xi \in \mathbb R^d,
 \end{equation}
then $\|T_m\|_{L^p(\mathbb R^d;X)} = \beta_{p,X}^{\{a_1,\ldots,a_d\}}$: ``$\leq$'' follows from Example \ref{ex:a_1a_dalphain02} and the triangle inequality, and ``$\geq$'' follows from Theorem \ref{thm:multlowerbound}.
\end{example}

\begin{remark}\label{rem:addingcItothesymbol}
 Notice that for each $c\in \mathbb C$
 \[
  \frac{(a_1+c)|\xi_1|^{\alpha} + \cdots + (a_d+c)|\xi_d|^{\alpha}}{|\xi_1|^{\alpha} + \cdots + |\xi_d|^{\alpha}} = \frac{a_1|\xi_1|^{\alpha} + \cdots + a_d|\xi_d|^{\alpha}}{|\xi_1|^{\alpha} + \cdots + |\xi_d|^{\alpha}} + c,\;\;\; \xi\in \mathbb R^d\setminus \{0\}.
 \]
Therefore for $m$ defined by \eqref{eq:symbmwithprmeasure}
\[
 \|T_m+cI\|_{L^p(\mathbb R^d;X)} = \|T_{m+c}\|_{L^p(\mathbb R^d;X)} = \beta_{p,X}^{\{a_1+c,\ldots,a_d+c\}}.
\]
\end{remark}

\begin{example}\label{exFourmultGroenevelt}
 Let $d=2$, $0\leq u<v\leq 2$, $\rho$ be the uniform distribution on $(u,v]$. Define $\kappa:(0,\infty)\to\mathbb R$ in the following way
 \[
  \kappa(t):= \frac{\log(\frac{t^u+1}{t^v+1})}{\log(\frac{t^u}{t^v})},\;\;\; t>0.
 \]
Then by Example \ref{ex:rho(alpha)in02} we have that if
\[
    m(\xi) = a_1 \Biggl(1+\kappa\left(\frac{|\xi_2|}{|\xi_1|}\right)\Biggr)
    + a_2 \Biggl(1+\kappa\left(\frac{|\xi_1|}{|\xi_2|}\right)\Biggr),\;\;\; \xi \in \mathbb R^d,
\]
then $\|T_m\|_{L^p(\mathbb R^d;X)}= \beta_{p,X}^{\{a_1,a_2\}}$. In particular if $a_1=1$, $a_2=0$, and $u=0$, then
\[
  m(\xi) =1+\frac{\log{2}-\log{\bigl(\frac{|\xi_2|^v}{|\xi_1|^v}+1\bigr)}}{\log{\bigl(\frac{|\xi_2|^v}{|\xi_1|^v}\bigr)}},\;\;\; \xi \in \mathbb R^d,
\]
is such that $\|T_m\|_{L^p(\mathbb R^d;X)}= \beta_{p,X}^{\{0,1\}}$. Moreover, by Remark \ref{rem:addingcItothesymbol} one gets that $\|T_{m}+cI\|_{L^p(\mathbb R^d;X)} = \|T_{m+c}\|_{L^p(\mathbb R^d;X)} = \beta_{p,X}^{c,c+1}$ for any $c\in \mathbb C$.
\end{example}

\begin{remark}\label{rem:conterexevenhomandunbdd} The reader might think that any even homogeneous Fourier multiplier is of the form \eqref{eq:defofm(xi)} for some $\phi$ and $\psi$ such that $\|\phi\|_{\infty}, \|\psi\|_{\infty}<\infty$. Unfortunately this is not true, and there are bounded even homogeneous Fourier multipliers which are unbounded on $L^p(\mathbb R^d)$ for any $p\neq 2$. Nonconstructively this fact was shown in \cite[pp.\ 59-60]{CS95} in the case $d\geq 3$. Here we construct such a symbol in the general case $d\geq 2$. Let
\[
m(\xi) =
\begin{cases}
e^{i\frac{|\xi_1|^2 + \cdots + |\xi_d|^2}{|\xi_d|^2}},\;\;\; &\xi_1\neq 0,\\
 0,\;\;\;&\xi_1= 0.
\end{cases}                                                                                                                                                                                                                                                                                                                                                                                                                                                                                                                                                                                                                                                                                                                                                                                                                                                                                                                          \]
Fix $p\neq 2$ and assume that $\|T_m\|_{\mathcal L(L^p(\mathbb R^d))}<\infty$. Then by \cite[Theorem 2.5.16]{Graf} for a.e.\ fixed $\xi_d \in \mathbb R\setminus\{0\}$ a multiplier with a symbol $e^{i\frac{|\xi_1|^2 + \cdots + |\xi_d|^2}{|\xi_d|^2}}$ is bounded on $L^p(\mathbb R^{d-1})$. But this contradicts with the H\"ormander theorem (see Lemma 1.4 and Theorem 1.14 in \cite{Hor60}). Thus $\|T_m\|_{\mathcal L(L^p(\mathbb R^d))}=\infty$ for any $p\neq 2$.
\end{remark}

\begin{remark}
 Let $p\in (1,2)\cup (2,\infty)$, $d\geq 2$. In this case any Fourier multiplier $T_m$ with $C^{\infty}$ even homogeneous symbol $m$ is bounded on $L^p(\mathbb R^d;X)$ with $X$ having the UMD property due to the Mihlin multiplier theorem \cite[Theorem 6.2.7]{Graf} and its generalization to the UMD space case \cite[Theorem 5.5.10]{HNVW1} (in fact the $C^{\infty}$-condition can be weakened a lot: due to Theorem 3.1 and Corollary 4.1 in \cite{Hyt10} it is sufficient to have $m|_{\mathbb R^d\setminus \{0\}} \in C^{\lfloor \frac d2\rfloor + 1}$). Nevertheless, by the latter remark it is not true that $T_m$ is bounded on $L^p(\mathbb R^d)$ for a general bounded even homogeneous $m$. Therefore it is not true that $T_m$ is bounded on $L^p(\mathbb R^d)$ with a general continuous even homogeneous $m$. Indeed, assume that $\|T_{m}\|_{\mathcal L(L^p(\mathbb R^d))}<\infty$ for any continuous even homogeneous $m$. Then by the closed graph theorem, $m\mapsto T_m$ is a bounded operator, and $\|T_{m}\|_{\mathcal L(L^p(\mathbb R^d))}\leq C\|m\|_{\infty}$ 
 for some $C>0$. But hence the same estimate holds for a general bounded even homogeneous $m$: if $(m_n)_{n\geq 0}$ is a sequence of continuous even homogeneous functions such that $m_n \to m$ pointwise as $n\to \infty$ and $\|m_n\|_{\infty}\leq \|m\|_{\infty}$, then for any Schwartz function $f:\mathbb R^d\to \mathbb C$
 \[
  \|T_m f\|_{L^p(\mathbb R^d)}\leq \liminf_{n\to \infty} \|T_{m_n} f\|_{L^p(\mathbb R^d)} \leq C\|f\|_{L^p(\mathbb R^d)},
 \]
 which contradicts with Remark \ref{rem:conterexevenhomandunbdd}.
\end{remark}

\bibliographystyle{plain}

\def\cprime{$'$} \def\polhk#1{\setbox0=\hbox{#1}{\ooalign{\hidewidth
  \lower1.5ex\hbox{`}\hidewidth\crcr\unhbox0}}}
  \def\polhk#1{\setbox0=\hbox{#1}{\ooalign{\hidewidth
  \lower1.5ex\hbox{`}\hidewidth\crcr\unhbox0}}} \def\cprime{$'$}

\end{document}